\newtheorem{theorem}{Theorem}[section]
\newtheorem{proposition}[theorem]{Proposition}
\newtheorem{corollary}[theorem]{Corollary}
\newtheorem{lemma}[theorem]{Lemma}
\theoremstyle{definition}
\newtheorem{remark}[theorem]{Remark}
\newtheorem{example}[theorem]{Example}
\newcommand{\Ps}{\mathbb{P}}
\newcommand{\C}{\mathbb{C}}
\newcommand{\Z}{\mathbb{Z}}
\newcommand{\R}{\mathbb{R}}
\newcommand{\N}{\mathbb{N}}
\newcommand{\Q}{\mathbb{Q}}
\newcommand{\End}{\mathrm{End}}
\newcommand{\Per}{\mathrm{Per}}
\newcommand{\PrePer}{\mathrm{PrePer}}
\setlist{label=\protect{(\roman*)}, itemsep=5pt}
\title[Centralizers of endomorphisms]{On the centralizers of endomorphisms of the projective line}
\author{Alonso Beaumont} 
\date{November 2025}
\thanks{IRMAR - UMR CNRS 6625, Université de Rennes. E-mail: alonso.beaumont@univ-rennes.fr. Research conducted as part of my PhD project, supported by the European Research Council (ERC Groups of Algebraic Transformations 101053021).}
\begin{document}

\begin{abstract}
    Let $f$ be a dominant endomorphism of the projective line, which is not conjugate to a power map $z\mapsto z^{\pm d}$. We consider the centralizers of the iterates of $f$,
    $C(f^{n}):=\{\textrm{dominant}\;g:\Ps^{1}\rightarrow\Ps^{1}\;|\; g\circ f^{n}=f^{n}\circ g\}$, $n\geq1$,
    and prove that their union is equal to $C(f^{N})$ for some $N\geq1$. This solves a conjecture of F. Pakovich \cite[Conjecture 3.2]{Pak24}. As an application, we obtain a Tits alternative for cancellative semigroups of endomorphisms of the projective line, without an assumption of finite generation, extending the results in \cite{BHPT24}.
\end{abstract}

\maketitle

\section{Introduction}

Let $\End(\Ps^{1})^{+}$ be the set of dominant endomorphisms of the projective line over the complex numbers. It has the structure of a semigroup when endowed with the composition operation, which we denote multiplicatively. We consider centralizer semigroups inside $\End(\Ps^{1})^{+}$: for a fixed $f\in \End(\Ps^{1})^{+}$, set
\[C(f)=\{g\in\End(\Ps^{1})^{+}\;|\; gf=fg\}.\]
In \cite[Theorem 1.2]{Pak20}, F. Pakovich established a finiteness result for these semigroups, which is best stated by first excluding certain maps. An endomorphism $f$ will be called \textit{projective linear} (or simply \textit{linear}) if it is of degree $1$, and \textit{exceptional} if it is conjugate to a power map $z\mapsto z^{\pm d}$, a Chebyshev polynomial $\pm T_{d}$, or a Lattès map (see \cite{Mil06} for a presentation of these maps). Pakovich proves that for any non-linear, non-exceptional endomorphism $f$, the semigroup $C(f)$ is \textit{virtually cyclic}, i.e. there exists a finite set $\{h_{1},\cdots,h_{r}\}\subset \End(\Ps^{1})^{+}$ such that
\[C(f)=\{h_{1},\cdots,h_{r}\}\langle f\rangle:=\{h_{i}f^{j};\;1\leq i\leq r,\;j\geq0\}.\]
One can deduce from this result a well-known theorem of Ritt (\cite{Rit23}, \cite{Ere90}): if two non-linear, non-exceptional endomorphisms $f$ and $g$ commute, there exist integers $m,n\geq1$ such that $f^{m}=g^{n}$.

We refer the reader to \cite{Pak21} for further results on centralizers in $\End(\Ps^{1})^{+}$, and to \cite{CM21, CM23, Pak22b, BHPT24} for related results on semigroups in $\End(\Ps^{1})^{+}$.

Ritt's and Pakovich's theorems are illustrated by the following examples (see also \cite[\S 6]{Pak21}):

\begin{example}
    \label{ex1}
    Let $n\geq1$, and $\sigma:z\mapsto\zeta z$ where $\zeta$ is a primitive $n$-th root of unity. For any $g\in\End(\Ps^{1})^{+}$, the map $f:z\mapsto zg(z^{n})$ commutes with $\sigma$. Ritt's theorem can be easily checked for $f$ and $\sigma f$, since $(\sigma f)^{n}=\sigma^nf^n=f^n$. For a general choice of $g$, we have $C(f)=\{\mathrm{id},\sigma,\cdots,\sigma^{n-1}\}\langle f\rangle$. This is a special instance of a virtually cyclic semigroup, since any two maps of equal degree differ by a linear map.

    In the case of polynomials, these are the only kinds of centralizers that occur. More precisely, following \cite{BE87} and \cite{SS95}, one can show that for any non-linear, non-exceptional polynomial $f$, there exists a non-linear polynomial $f_{0}$ and a linear map $\sigma$ of some finite order $n$ such that $C(f)=\{\mathrm{id},\sigma,\cdots,\sigma^{n-1}\}\langle f_{0}\rangle$.
\end{example}

\begin{example}
    \label{ex2}
    Centralizers of rational maps display a wider range of behaviours than those of polynomial maps. First of all, the linear maps that commute with a given non-linear $f$ may form a non-cyclic finite group. More interestingly perhaps, a centralizer $C(f)$ may have elements of equal degree that do not differ by a linear map, as observed by Ritt \cite{Rit23}. Indeed, consider
    \[f:z\mapsto z\frac{z^{3}-8}{z^{3}+1}.\]
    This is an instance of Example \ref{ex1}; it commutes with $\sigma:z\mapsto\zeta z$ where $\zeta$ is a primitive third root of unity. However, $f$ also has a non-trivial decomposition:
    \[f=uv,\;\mathrm{where}\quad u:z\mapsto\frac{z^{2}-4}{z-1},\quad v:z\mapsto\frac{z^{2}+2}{z+1}.\]
    From the relation $uv\sigma=\sigma uv$ we deduce that $vu$ and $v\sigma u$ commute. More generally, for any $w$ that commutes with $\sigma$, $g:=vwu$ and $h:=v\sigma wu$ commute. For a general choice of $w$, the maps $g$ and $h$ are non-exceptional and $g\neq\tau h$ for any linear $\tau$; Ritt's and Pakovich's results still hold, more precisely $g^{3}=h^{3}$ and $C(g)=\{\mathrm{id},h,h^{2}\}\langle g\rangle$.
\end{example}

In \cite[Conjecture 3.1]{Pak24}, Pakovich asked whether the semigroup
\[C(f{^{\infty}}):=\bigcup_{n\geq1}C(f^{n})\]
has a virtually cyclic structure similar to that of $C(f)$. We answer this question positively:

\begin{theorem}
    \label{theo1}
    For any non-linear, non-exceptional $f\in\End(\Ps^{1})^{+}$ over the complex numbers, $C(f^{\infty})$ is virtually cyclic.
\end{theorem}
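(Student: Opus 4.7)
The plan is to establish the stronger statement $C(f^\infty) = C(f^N)$ for some $N \geq 1$, from which Theorem \ref{theo1} follows at once by applying Pakovich's theorem to the non-linear, non-exceptional map $f^N$. The strategy is to isolate within $C(f^\infty)$ a canonical subset $\mathcal{M}$ of ``$f$-minimal'' centralizing elements whose right-multiples by powers of $f$ exhaust $C(f^\infty)$, and then to show that $\mathcal{M}$ is finite.

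First I would record two preliminary observations. Since dominant endomorphisms are surjective, $\End(\Ps^{1})^{+}$ is right-cancellative: $\alpha f = \beta f$ forces $\alpha = \beta$. Combining this with the relation $g f^n = f^n g$, one checks that whenever $g \in C(f^n)$ factors as $g = g' \circ f$ in $\End(\Ps^{1})^{+}$, the quotient $g'$ automatically lies in $C(f^n)$. This motivates
\[
\mathcal{M}_n := \bigl\{\, g \in C(f^n) : g \neq g' \circ f \text{ for any } g' \in \End(\Ps^{1})^{+} \bigr\}, \qquad \mathcal{M} := \bigcup_{n \geq 1} \mathcal{M}_n.
\]
Writing $C(f^n) = F_n \langle f^n \rangle$ with $F_n$ finite via Pakovich's theorem, any $g \in \mathcal{M}_n$ must in fact lie in $F_n$: if $g = h \circ f^{nk}$ with $h \in F_n$ and $k \geq 1$, then the factorization $g = (h \circ f^{nk-1}) \circ f$ contradicts $f$-minimality. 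Hence each $\mathcal{M}_n$ is finite, and since the $f$-minimality condition does not depend on $n$ one has $\mathcal{M}_n \subseteq \mathcal{M}_{mn}$. A short argument based on iteratively peeling off $f$ on the right---which terminates for degree reasons---then yields $C(f^\infty) = \mathcal{M} \cdot \langle f \rangle$. Proving the theorem therefore reduces to showing that $\mathcal{M}$ is finite, for then choosing $N$ with $\mathcal{M} \subseteq C(f^N)$ gives $C(f^\infty) = C(f^N)$.

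To bound $|\mathcal{M}|$ it suffices to bound $\deg(g)$ uniformly for $g \in \mathcal{M}$, since $\mathcal{M}_n \subseteq F_n$ contains only finitely many maps of any given degree. The approach I would take is geometric: $g \in C(f^n)$ is equivalent to the graph $\Gamma_g \subset \Ps^{1} \times \Ps^{1}$ being $(f^n \times f^n)$-invariant. The classification of such invariant curves---going back to Medvedev--Scanlon in the polynomial case and extended to the rational setting by Pakovich (see \cite{Pak20, Pak22b, Pak24})---describes them in terms of a common Ritt-type decomposition of $f^n$. For non-exceptional $f$ the rigidity of these decompositions, reflecting the finiteness of the symmetry group of the Julia set of $f$, should constrain the decompositions of all iterates $f^n$ to lie in a fixed finite combinatorial structure attached to $f$ itself, with any extra copy of $f$ being absorbed into the right factor $\langle f \rangle$. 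The $f$-minimality condition then selects only the finitely many primitive building blocks from this structure, producing the desired uniform degree bound.

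The hard part is exactly this last, Ritt-theoretic step. Example \ref{ex2} already demonstrates that $C(f^n)$ can genuinely grow with $n$, as new centralizing maps arise from twists inside decompositions of $f^n$; the substantive content of the theorem is that for non-exceptional $f$ these twists---taken modulo absorption into $\langle f \rangle$---stabilize after finitely many iterations. Making this rigorous will require careful bookkeeping of how the decomposition data of $f^n$ varies with $n$, and is where the techniques of \cite{Pak24} are expected to play the decisive role.
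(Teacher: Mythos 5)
Your reduction to the finiteness of the set $\mathcal{M}$ of $f$-minimal elements is a clean reorganization: the right-cancellation step showing $g'\in C(f^n)$ whenever $g=g'\circ f$ is correct, the identification $\mathcal{M}_n\subseteq F_n$ is correct, and the peeling-off argument giving $C(f^{\infty})=\mathcal{M}\cdot\langle f\rangle$ and $C(f^{\infty})=C(f^{N})$ once $\mathcal{M}$ is finite is sound. A small logical slip: a uniform bound on $\deg(g)$ for $g\in\mathcal{M}$ does \emph{not} by itself give $|\mathcal{M}|<\infty$, since $\mathcal{M}=\bigcup_{n}\mathcal{M}_n$ is a union over all $n$ and the finiteness of each $F_n$ individually says nothing about the union; you additionally need that $C(f^{\infty})$ contains only finitely many maps of any fixed degree, which itself requires an argument (in the paper it is a consequence of Proposition \ref{prop5}, the three-common-fixed-points rigidity statement).

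The decisive issue, however, is that the substantive step --- producing a uniform degree bound on $\mathcal{M}$ --- is not proved but only gestured at, and the direction you indicate (classification of $(f^n\times f^n)$-invariant curves, bookkeeping of Ritt decompositions of $f^n$ as $n$ grows) is essentially the approach already available in the literature which did \emph{not} settle the question; that is precisely why \cite[Conjecture 3.2]{Pak24} was left open. The paper's proof instead injects a new ingredient of an arithmetic nature. Using Autissier's equidistribution theorem for Galois orbits of small points together with a specialization argument from the function field $\bar{\Q}(B)$ down to number fields, it shows (Proposition \ref{prop2}) that $C(f^{\infty})$ admits infinitely many \emph{finite} orbits in $\Per(f)$ --- something far from obvious, since unlike orbits of $C(f)$, orbits of $C(f^{\infty})$ through a periodic point are not trivially finite (cf.\ Remark \ref{rem1}). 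Having fixed a finite orbit $\mathcal{O}\subset J(f)$ of size at least $3$, the second ingredient is a Ritt-sequence rigidity lemma (Proposition \ref{prop5}): two commuting non-exceptional maps of equal degree, equal maximal measure, and three common Julia-set fixed points coincide. Together these make $g\mapsto(\deg g\bmod\ell(F),\,g|_{\mathcal{O}})$ injective on $C(f^{\infty})$ up to composition by powers of $f$, which gives virtual cyclicity directly. Your plan lacks both the arithmetic equidistribution input and the three-fixed-points rigidity, and without a substitute for the former in particular the crucial degree bound remains unestablished.
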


Our result also holds for subsemigroups of $C(f^{\infty})$; in particular it provides an alternative proof of \cite[Theorem 1.2]{Pak20}. Note that unlike Pakovich's proof, we do use Ritt's theorem, and we don't obtain an effective bound on the degrees of $h_{1},\cdots,h_{r}$. As remarked by Pakovich, Theorem \ref{theo1} implies that the sequence of centralizers is stationary: there exists an $N\geq 1$ such that $C(f^{\infty})=C(f^{N})$. This is true for a slightly larger class of endomorphisms:

\begin{corollary}
    \label{cor1}
    For any $f\in\End(\Ps^{1})^{+}$ which is not conjugate to a power map, there exists $N\geq1$ such that $C(f^{\infty})=C(f^{N})$.
\end{corollary}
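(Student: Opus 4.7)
The plan is to split into cases by the type of $f$: the non-linear non-exceptional case will follow from Theorem \ref{theo1}, and the remaining cases ($f$ linear, $f$ Chebyshev, $f$ Lattès) will be handled by direct analysis exploiting the algebraic structure of each class.

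If $f$ is non-linear and non-exceptional, I would apply Theorem \ref{theo1}: $C(f^{\infty}) = \{h_1,\ldots,h_r\}\langle f\rangle$ for a finite set of $h_i$. Since each $h_i$ commutes with some $f^{n_i}$, setting $N = \operatorname{lcm}(n_1,\ldots,n_r)$ places every $h_i$ in $C(f^N)$; as $C(f^N)$ is a semigroup containing the $h_i$ and $f$, it contains the whole of $C(f^{\infty})$, and the reverse inclusion is immediate.

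If $f$ is linear of finite order $k$, then $f^k = \mathrm{id}$ and $C(f^k) = \End(\Ps^1)^+$, so $N = k$ works trivially. If $f$ is linear of infinite order, it is conjugate to $z \mapsto z+1$ or to $z \mapsto \lambda z$ with $\lambda \in \C^*$ not a root of unity; in either case the fixed points of $f^n$ coincide, with the same dynamical roles, with those of $f$, and a direct computation on rational maps commuting with such an $f^n$ (using that periodic rational functions are constant, respectively that $\lambda^{n(k-1)}=1$ forces $k=1$) shows that $C(f^n) = C(f)$, so $N = 1$ works.

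For the remaining exceptional non-power cases, I would transfer the problem to the semiconjugate model: Chebyshev maps lift along $\pi: z \mapsto z + z^{-1}$ to power maps on $\C^*$, and Lattès maps lift along a finite quotient $E \to \Ps^1$ to affine endomorphisms of an elliptic curve $E$. In each setting, the centralizer of $f^n$ on $\Ps^1$ corresponds to elements of a fixed commutative endomorphism ring (the integers for Chebyshev, $\End(E)$ for Lattès) satisfying a quotient-compatibility condition; since this description is independent of $n$, $C(f^n) = C(f)$ for all $n$ and $N = 1$ suffices. The main obstacle will be the Lattès case, where passing the centralizer down from $E$ to $\Ps^1$ requires handling the torsion translations that may appear in an affine Lattès model as well as the finite automorphism group of the quotient; commutativity of $\End(E)$ is the key input that makes $C(f^n)$ stabilize immediately in $n$.
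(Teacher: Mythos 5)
Your case decomposition matches the paper's, and your treatment of the non-exceptional case (via Theorem \ref{theo1}, taking $N$ to absorb the finitely many $n_i$) and the linear case are essentially the paper's arguments. However, your claim that $N=1$ suffices in both the Chebyshev and Lattès cases is false, and the error comes precisely from the point you flag as a "main obstacle": the torsion translations (in the Lattès case) and the sign ambiguities under the semiconjugacy (in the Chebyshev case) introduce a genuine dependence on $n$ in the quotient-compatibility condition.

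Concretely, for Chebyshev: take $f=-T_d$ with $d$ odd, so that $f$ is an odd polynomial. Then $T_e$ commutes with $-T_d$ only when $e$ is odd (one checks $(-T_d)(T_e)=-T_{de}$ while $(T_e)(-T_d)=\pm T_{de}$ with sign depending on the parity of $e$), so $C(f)=\{\pm T_e : e\text{ odd}\}$. But $f^2=T_{d^2}$ with $d^2$ odd, so $C(f^2)=\{\pm T_e : e\geq 1\}$, strictly larger. Hence $N=2$ is actually needed here; the lifted picture on $\mathbb{C}^*$ has sign data that is not independent of $n$. For Lattès, the torsion translation really does change with $n$: with $R=\Z$, $\Gamma$ of order $2$, $\alpha=3$, and $t\in E[2]$ nonzero, the map $g=\ell_{2,s}$ commutes with $f^2=\ell_{9,0}$ (the commutation condition \eqref{eq7} degenerates because $t_2=[1+\alpha](t)=[4](t)=0$) but not with $f=\ell_{3,t}$ (the condition $[\beta-1](t)=0$ fails for $\beta=2$). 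Commutativity of $\End(E)$ alone does not force stabilization at $n=1$; what the paper actually uses is finiteness of $E[1-\omega]$ and of $R/(1-\omega)$ to deduce that the function $g\mapsto n(g)$ takes only finitely many values, and then takes $N$ to be their product. Your proposal needs an analogous finiteness argument to close both cases.
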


This result is a step forward towards understanding the structure of sets of endomorphisms of $\Ps^{1}$ that share some dynamical data. An example of great interest comes from the notion of the \textit{measure of maximal entropy}. To a non-linear endomorphism $f$ one can attach a canonical invariant measure $\mu_{f}$ whose support is the Julia set of $f$, first defined in \cite{Bro65} for polynomials, and in \cite{FLM83}, \cite{Lju83} for general endomorphisms. This motivates the definition of the following semigroup:
\[E(f):=\{g\in\End(\Ps^{1})^{+}\;|\;g^{*}\mu_{f}=\deg(g)\mu_{f}\}.\]
Equivalently, it is the semigroup of elements of $\End(\Ps^{1})^{+}$ with the same measure of maximal entropy as $f$, to which we  also add the linear maps preserving $\mu_{f}$. We note that $C(f^{\infty})\subset E(f)$ (see Subsection \ref{ss21}), although the inclusion is in general strict. The relations between elements inside $E(f)$ have been studied in \cite{Lev90}, \cite{LP97}, \cite{Ye15}. The structure of this semigroup is only well-understood in the polynomial case, as established in \cite{BE87} and \cite{SS95}: if $f$ is a non-exceptional polynomial, $E(f)$ is virtually cyclic.\\

The proof of Theorem \ref{theo1} is done in two steps. First, we prove that the semigroup $C(f^{\infty})$ has infinitely many finite orbits. This is achieved following an arithmetic equidistribution argument of V. Huguin \cite{Hug23}. We can therefore fix a finite orbit $\mathcal{O}$ of size at least $3$, which can be chosen inside the common Julia set of the non-linear elements of $C(f^{\infty})$. The second step consists in proving that an element of  $C(f^{\infty})$ is characterized by its degree and its action on $\mathcal{O}$; the virtual cyclicity of $C(f^{\infty})$ follows from this. This second step is obtained following algebraic arguments which appear in the orginial proof of Ritt's theorem \cite{Rit23}.\\

The proof strategy is insipired by \cite{BHPT24}, which establishes an analog of the Tits alternative for finitely generated semigroups of $\End(\Ps^{1})^{+}$. In turn, our result allows us to strengthen this alternative by dropping the assumption of finite generation.

A semigroup $S$ is said to be \textit{left cancellative} (resp. \textit{right cancellative}) if for any $a,b,c\in S$, the relation $ab=ac$ (resp. $ba=ca$) implies $b=c$; a semigroup is \textit{cancellative} if it is both left and right cancellative. Note that $\End(\Ps^{1})^{+}$ (and any of its subsemigroups) is right cancellative, as it is comprised of dominant maps. We say that a semigroup $S$ satifies an \textit{identity} if there exist two distinct words $w_{1},w_{2}$ on two letters such that for any $a,b\in S$, $w_{1}(a,b)=w_{2}(a,b)$ as elements of $S$. We begin by stating a result over a finitely generated field; it is an analog of the generalized Tits alternative of J. Okniński and A. Salwa \cite{OS95}.

\begin{theorem}
    \label{theo2}
    Let $S$ be a semigroup in $\End(\Ps^{1})^{+}$ defined over a finitely generated field of characteristic $0$. The following assertions are equivalent:
    \begin{enumerate}
        \item $S$ does not contain a free subsemigroup of rank $2$.
        \item $S$ satisfies a semigroup identity.
        \item Any cancellative subsemigroup of $S$ can be embedded in a virtually abelian group.
    \end{enumerate}
\end{theorem}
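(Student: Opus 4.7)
The plan is to reduce Theorem \ref{theo2} to the finitely generated case of the Tits alternative from \cite{BHPT24}, dispensing with the finite generation hypothesis through Corollary \ref{cor1}. The implications (ii) $\Rightarrow$ (i) and (iii) $\Rightarrow$ (ii) are the formal ones: a non-trivial identity fails on any free subsemigroup of rank $2$, and a virtually abelian group with an abelian subgroup of index $n$ satisfies $[x^n, y^n] = 1$. The subtle point in (iii) $\Rightarrow$ (ii) is that $S$ itself need not be cancellative, so one needs a uniform bound on the index $n$ across all cancellative subsemigroups of $S$; this bound can be extracted from the structural analysis performed in (i) $\Rightarrow$ (iii).

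For (i) $\Rightarrow$ (iii), fix a cancellative subsemigroup $T \subseteq S$ without a free rank-$2$ subsemigroup. Two auxiliary cases are dealt with directly: if $T$ consists only of linear maps, then $T \subseteq \mathrm{PGL}_2(\K)$ and one applies the classical Tits alternative for linear groups (noting that the virtually solvable, non-virtually-abelian subgroups of $\mathrm{PGL}_2$ always contain a free rank-$2$ subsemigroup, by a direct computation in the affine group); if $T$ contains a non-linear exceptional element, one uses the explicit description of the centralizers of power, Chebyshev, and Lattès maps to produce the embedding.

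In the main case, $T$ contains a non-linear, non-exceptional element $f$. By Corollary \ref{cor1}, there exists $N \geq 1$ with $C(f^{\infty}) = C(f^{N})$, and by \cite[Theorem 1.2]{Pak20} this semigroup is virtually cyclic, of the form $\{h_1, \dots, h_r\} \langle f_0 \rangle$. For any $g \in T$, the two-generator subsemigroup $\langle f, g \rangle$ is finitely generated, cancellative, and contains no free rank-$2$ subsemigroup, hence by \cite{BHPT24} it embeds in a virtually abelian group. This forces $g^m f^n = f^n g^m$ for some $m, n \geq 1$, so $g^m \in C(f^n) \subseteq C(f^N)$. Every element of $T$ thus admits a power in the fixed virtually cyclic semigroup $C(f^N)$.

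The main obstacle is the last step: upgrading this pointwise ``power-in-$C(f^N)$'' information into a global embedding of $T$ in a virtually abelian group. The plan is to exploit the rigidity at the heart of the proof of Theorem \ref{theo1}, namely that any element of $C(f^\infty)$ is determined by its degree and by its action on a chosen finite orbit $\mathcal{O}$ of size at least $3$. Since $g^m$ preserves $\mathcal{O}$ for every $g \in T$, one first argues that $g$ itself injectively permutes a bounded finite collection of related orbits, producing a semigroup homomorphism from $T$ to a finite group whose kernel lies in the cyclic semigroup $\langle f_0 \rangle$. Passing to the group of fractions afforded by the cancellativity of $T$ then embeds $T$ into a virtually cyclic, hence virtually abelian, group, completing the proof.
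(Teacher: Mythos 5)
Your plan reverses the logical structure of the paper --- the paper proves $(i)\Rightarrow(ii)$ directly by exhibiting an explicit identity, and then deduces $(i)\Rightarrow(iii)$ as a consequence of that identity, whereas you attempt to prove $(i)\Rightarrow(iii)$ first and then obtain $(ii)$ from $(iii)$. This reversal creates a genuine gap in the $(iii)\Rightarrow(ii)$ leg. Assertion $(iii)$ only speaks about cancellative subsemigroups of $S$, but $(ii)$ requires an identity valid for \emph{every} pair $a,b\in S$. Even granting a uniform bound on the abelian index across all cancellative subsemigroups of $S$, this yields $a^{n}b^{n}=b^{n}a^{n}$ only for pairs that lie in a common cancellative subsemigroup; since $\End(\Ps^{1})^{+}$ is merely right cancellative, two elements of $S$ may well generate a non-cancellative subsemigroup (e.g.\ $z\mapsto z^{2}$ and $z\mapsto -z$), and for such pairs your argument produces no relation at all. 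The paper sidesteps this precisely by establishing the identity $g^{N}(g^{N}h^{N})^{N}=(g^{N}h^{N})^{N}g^{N}$ for \emph{arbitrary} $g,h\in S$, via a direct argument (find a common fixed point of $g^{N}$ and $(g^{N}h^{N})^{N}$ in a fixed finite orbit $\mathcal{O}\subset\PrePer(f)$, then invoke Lemma~\ref{lem1}); only afterwards does it use left cancellation to derive $(iii)$.

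Your $(i)\Rightarrow(iii)$ is also not complete. You correctly reduce to the statement that for each $g\in T$ some power $g^{m}$ lands in $C(f^{N})$, but the ``main obstacle'' paragraph --- building a homomorphism from $T$ to a finite permutation group whose kernel lies in $\langle f_{0}\rangle$ --- is left at the level of intention, and it is exactly the step that requires a new idea: a power of $g$ preserving an orbit does not by itself show that $g$ permutes a fixed finite set, and the claimed ``bounded collection of related orbits'' is never specified. The paper's route is both shorter and more self-contained: from the identity one gets $(f^{N}g^{N})^{N}=(g^{N}f^{N})^{N}$, hence $\Per(f)=\Per(g)$ for all non-linear $g\in T$, then the Northcott--Moriwaki property over the finitely generated base field gives a finite $T$-orbit $\mathcal{O}_{0}$ of size $\ge 3$ in $\Per(f)\cap J$; the stabiliser $T_{0}=\{g\in T\,:\,g|_{\mathcal{O}_{0}}=\mathrm{id}\}$ is abelian (again by Lemma~\ref{lem1}) and of finite index, so $G_{T_{0}}$ is abelian of finite index in $G_{T}$. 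Note that this avoids Theorem~\ref{theo1}, Corollary~\ref{cor1}, Pakovich's bound, and the whole specialisation apparatus, none of which is needed here. Finally, for the linear case you appeal to a ``direct computation'' in the affine group instead of Okni\'nski--Salwa \cite{OS95}; the claim that virtually solvable but non-virtually-abelian subgroups of $\mathrm{PGL}_{2}$ always contain a free rank-$2$ subsemigroup is plausible over a finitely generated field but is asserted rather than proved, and in any case the cited result handles it cleanly.
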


A semigroup $S$ in $\End(\Ps^{1})^{+}$ will be called \textit{linear} if it consists of linear maps, and it will be called a \textit{power semigroup} if, up to simultaneous conjugation, its elements are of the form $z\mapsto\zeta z^{\pm d}$, where $\zeta$ is a root of unity and $d\geq1$. The following is a counter-example to the theorem above if we drop the assumption of a finitely generated field of definition:

\begin{example}
    Let $S$ be the power semigroup consisting of elements of the form $p_{d,\zeta}:z\mapsto \zeta z^{d}$ where $d$ is a power of three and $\zeta$ is a root of unity of order a power of two. The linear elements in $S$ form a group which is isomorphic to the Prüfer $2$-group $\Z(2^{^{\infty}}):=\Z[1/2]/\Z$. Moreover, the element $p_{3,1}$ acts on $\Z(2^{^{\infty}})$ by multiplication by $3$. This defines an embedding
    \[S \longrightarrow G:=\Z(2^{^{\infty}})\rtimes\Z.\]
    This embedding is the smallest possible for $S$; any other embedding of $S$ into a group factors through it. Now, $G$ is not virtually abelian. Indeed, if $H$ is a finite index subgroup of $G$, then $H\cap \Z(2^{^{\infty}})$ is of finite index in $\Z(2^{^{\infty}})$, so $H\cap \Z(2^{^{\infty}})=\Z(2^{^{\infty}})$. Moreover, $H$ contains some $p_{3^{n},1}$. If we let $p_{1,\zeta}\in H$ be of large enough order, then $p_{1,\zeta}p_{3^{n},1}=p_{3^{n},\zeta}\neq p_{3^{n},\zeta^{3^{n}}}=p_{3^{n},1}p_{1,\zeta}$, so $H$ is not abelian. On the other hand, $G$ is \textit{locally virtually abelian}. Indeed, for any finitely generated $H$ in $G$, $H\cap \Z(2^{^{\infty}})$ is finite, so $H$ is finite-by-abelian and finitely generated and thus virtually abelian. In particular, $S$ cannot contain two elements that generate a free subsemigroup of rank $2$.
\end{example}

One can also construct linear counter-examples in the same vein, such as the semigroup $\{z\mapsto\zeta z+t\;;\;\zeta\textrm{ root of unity },\;t\in\C\}$. Outside of examples like these ones, any cancellative semigroup in $\End(\Ps^{1})^{+}$ will satisfy Theorem \ref{theo2}, without an initial assumption of finite generation. More precisely, we have the following

\begin{corollary}
    \label{cor2}
    Let $S$ be a cancellative semigroup in $\End(\Ps^{1})^{+}$ defined over the complex numbers. Then one of the following holds:
    \begin{enumerate}
        \item $S$ contains a free subsemigroup of rank $2$.
        \item $S$ can be embedded in a virtually abelian group, unless it is a linear or power semigroup, in which case it can be embedded in a locally virtually abelian group.
    \end{enumerate}
\end{corollary}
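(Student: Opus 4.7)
The strategy combines Theorem \ref{theo2} (for finitely generated subsemigroups over finitely generated fields) with Theorem \ref{theo1} and Corollary \ref{cor1} to upgrade the local statement to a global one outside the two exceptional classes.

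Assume $S$ contains no free subsemigroup of rank $2$. Any finitely generated subsemigroup $T=\langle f_{1},\ldots,f_{k}\rangle$ is defined over the finitely generated field $\Q(f_{1},\ldots,f_{k})\subset\C$, is cancellative, and still contains no free rank-$2$ subsemigroup; so by Theorem \ref{theo2} it embeds in a virtually abelian group. This already proves that $S$ is locally virtually abelian, which is the desired conclusion for linear and power semigroups (sharply, by the examples given after Theorem \ref{theo2}).

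Now suppose $S$ is neither linear nor a power semigroup. The plan is to produce a non-linear $f\in S$ not conjugate to a power map, to establish $S\subset C(f^{\infty})$, and to conclude via Corollary \ref{cor1}. When $f$ is non-exceptional, Theorem \ref{theo1} makes $C(f^{\infty})$ virtually cyclic; when $f$ is Chebyshev- or Lattès-type, Corollary \ref{cor1} gives $C(f^{\infty})=C(f^{N})$ and the classical descriptions of these centralizers produce a virtually abelian embedding. Either way, $C(f^{\infty})$ embeds in a virtually abelian group, and hence so does any cancellative subsemigroup of it; showing $S\subset C(f^{\infty})$ would complete the proof. For this last inclusion, take $g\in S$: Theorem \ref{theo2} applied to $\langle f,g\rangle$ embeds it in a virtually abelian group, forcing $g^{m}f^{m}=f^{m}g^{m}$ for some $m\geq 1$, i.e. $g^{m}\in C(f^{\infty})$. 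To promote this to $g\in C(f^{\infty})$, I would use the distinguished finite orbit $\mathcal{O}\subset\Ps^{1}$ of size at least $3$ in the common Julia set---the cornerstone of the proof of Theorem \ref{theo1}---together with the characterization of elements of $C(f^{\infty})$ by their degree and their action on $\mathcal{O}$: the relation $g^{m}(\mathcal{O})=\mathcal{O}$ combined with cancellativity of $S$ should pin $g$ itself into $C(f^{\infty})$.

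The main obstacle is this final promotion step. Even abstractly, the relation $g^{m}f^{m}=f^{m}g^{m}$ inside a virtually abelian group does not imply that $g$ commutes with any positive power of $f$ (the infinite dihedral group provides elementary counter-examples), so the argument cannot be purely group-theoretic. It must be genuinely dynamical, exploiting the fact that $S$ consists of \emph{dominant rational} maps and that the finite orbit $\mathcal{O}$ from Theorem \ref{theo1} rigidifies the elements of $C(f^{\infty})$ sufficiently to rule out the abstract obstruction.
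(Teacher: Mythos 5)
Your overall strategy matches the paper's: reduce finitely generated subsemigroups to Theorem~\ref{theo2}, deduce local virtual abelianness, and then, when $S$ is neither linear nor a power semigroup, try to show $S\subset C(f^{\infty})$ for a suitable non-linear $f\in S$ and invoke Theorem~\ref{theo1} (or the Chebyshev/Latt\`{e}s descriptions) to get virtual abelianness of $G_{S}$. The gap you flag at the end is genuine in your write-up, but it is also easier to close than you suggest, and the abstract obstruction you worry about simply does not arise in this dynamical setting.

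The key fact you are missing is that $\Per(h^{m})=\Per(h)$ for every $m\geq 1$ and every $h\in\End(\Ps^{1})^{+}$. Once you have $g^{m}f^{m}=f^{m}g^{m}$ for some $m$, the non-linear maps $g^{m}$ and $f^{m}$ commute, so by Proposition~\ref{prop1}, $(i)\Rightarrow(iii)$, applied to $f^{m}$ and $g^{m}$, we get $\Per(g^{m})=\Per(f^{m})$, hence $\Per(g)=\Per(f)$. Then Proposition~\ref{prop1}, $(iii)\Rightarrow(i)$, applied to $f$ and $g$ immediately gives $g\in C(f^{\infty})$. This is exactly the ``genuinely dynamical'' ingredient you anticipate, and it bypasses the infinite-dihedral obstruction completely: the failure of ``commuting powers implies commuting iterates'' in abstract groups is irrelevant, because one is comparing invariant \emph{sets}, not elements, and periodic sets do not change under passing to iterates. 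You also need to handle linear $\sigma\in S$ separately: from $\Per(f)=\Per(\sigma f)$ one gets $\sigma(\Per(f))=\Per(f)$, and the final clause of Proposition~\ref{prop1} then puts $\sigma\in C(f^{\infty})$.

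For comparison, the paper organizes this slightly differently: rather than re-deriving $\Per(f)=\Per(g)$ from the commuting-powers identity, it reads this equality directly off the internal workings of the proof of Theorem~\ref{theo2}, $(i)\Rightarrow(iii)$ (the equality \eqref{eq9}), and the Chebyshev/Latt\`{e}s cases are dispatched in one stroke by Lemma~\ref{lem2} and Theorem~\ref{theo2} without ever passing through $C(f^{\infty})$. Both routes are valid; yours is essentially the same argument with the last step left open.
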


\section{Preliminaries}\label{s2}

\subsection{Measure of maximal entropy}\label{ss21} Let $f\in\End(\Ps^{1})^{+}$. Recall that the pullback of a measure $\mu$ by $f$ is defined by $f^{*}\mu:A\mapsto\int_{\Ps^{1}(\C)}\#(f^{-1}(z)\cap A)d\mu(z)$, where each set $f^{-1}(z)\cap A$ is counted with multiplicity.

Assume that $f$ is non-linear. Its exceptional set is defined as the largest finite set in $\Ps^{1}(\C)$ that is invariant by $f^{-1}$. The \textit{measure of maximal entropy} of $f$ can be characterized as the unique probability measure $\mu_{f}$ such that $f^{*}\mu_{f}=\deg(f)\mu_{f}$, whose support is not contained in the exceptional set (see \cite{FLM83}, \cite{Lju83}). For any non-linear $g\in E(f)$, we have $g^{*}\mu_{f}=\deg(g)\mu_{f}$, so by the uniqueness of $\mu_{g}$, we have $\mu_{f}=\mu_{g}$, as stated in the introduction. In particular, $\mu_{f^{n}}=\mu_{f}$ for any $n\geq1$.

Let us also briefly explain why $C(f^{\infty})\subset E(f)$. If $g$ commutes with $f$, then the probability measure $\frac{1}{\deg(g)}g^{*}\mu_{f}$ satisfies
\[f^{*}\left(\frac{1}{\deg(g)}g^{*}\mu_{f}\right)=\frac{1}{\deg(g)}g^{*}(f^{*}\mu_{f})=\deg(f)\left(\frac{1}{\deg(g)}g^{*}\mu_{f}\right)\]
so by the uniqueness of $\mu_{f}$, we have $\frac{1}{\deg(g)}g^{*}\mu_{f}=\mu_{f}$, that is, $g\in E(f)$. For any $n\geq1$, we have $\mu_{f^{n}}=\mu_{f}$, so $g\in E(f)$ for any $g\in C(f^{n})$, hence for any $g\in C(f^{\infty})$. In particular, a non-linear element of $C(f^{\infty})$ has the same Julia set as $f$.

\subsection{Characterizations of \texorpdfstring{$C(f^{\infty})$}{C(f∞)}}\label{ss22} For any endomorphism $f\in \End(\Ps^{1})^{+}$, we denote by $\Per(f)$ its set of periodic points, and by $\PrePer(f)$ its set of preperiodic points.

We start by stating a crucial lemma that will be used throughout this text (see \cite[Lemma 2]{Lev90}, \cite[Theorem 1.5]{Ye15}):

\begin{lemma}
    \label{lem1}
    Let $f,g\in \End(\Ps^{1})^{+}$ be two non-linear endomorphisms with the same measure of maximal entropy. Suppose they have a common fixed point in their common Julia set. Then $f$ and $g$ commute.
\end{lemma}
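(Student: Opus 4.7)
The plan is to deduce commutativity from the structure of the linearizations of $f$ and $g$ at their common fixed point $p$, using the common measure of maximal entropy $\mu := \mu_f = \mu_g$ as the rigidifying ingredient. First I would record that $J(f) = J(g) = \mathrm{supp}(\mu)$, so $p$ lies in both Julia sets, and the multipliers $\lambda_f := f'(p)$ and $\lambda_g := g'(p)$ satisfy $|\lambda_f|, |\lambda_g| \geq 1$. I would focus on the generic case $|\lambda_f|, |\lambda_g| > 1$, in which $p$ is repelling for both maps; the parabolic or Cremer cases are more delicate and would be treated separately, for instance by using the equality $\PrePer(f) = \PrePer(g)$ (a known consequence of $\mu_f = \mu_g$ via equidistribution) to reduce to a nearby common repelling periodic point and then passing back to the original pair.

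In the repelling case, let $\phi_f : \C \to \Ps^1$ denote the Poincaré linearization of $f$ at $p$: the unique entire function with $\phi_f(0) = p$, $\phi_f'(0) = 1$, and $f \circ \phi_f = \phi_f \circ M_{\lambda_f}$, where $M_\lambda(z) := \lambda z$. Define $\phi_g$ analogously for $g$; by standard results the image of each $\phi_*$ misses at most two (exceptional) points of $\Ps^1$. The crux of the argument is then the rigidity claim that $\phi_g = \phi_f \circ M_\tau$ for some $\tau \in \C^*$. To establish it, I would analyze the pullback measures $\phi_f^*\mu$ and $\phi_g^*\mu$ on $\C$: the functional equations for $\phi_f, \phi_g$ together with $f^*\mu = \deg(f)\,\mu$ and $g^*\mu = \deg(g)\,\mu$ force these pullbacks to satisfy explicit scaling laws under $M_{\lambda_f}$ and $M_{\lambda_g}$, while their germs at $0$ are prescribed by the behavior of $\mu$ near $p$. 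Rigidity for this type of scaling equation then forces the two measures to coincide up to a linear change of source, which combined with the normalization $\phi_f(0) = \phi_g(0) = p$ yields the desired relation $\phi_g = \phi_f \circ M_\tau$.

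Given this rigidity, the conclusion is a short computation. From $\phi_g = \phi_f \circ M_\tau$ and the functional equation for $\phi_g$ one extracts $g \circ \phi_f = \phi_f \circ M_{\lambda_g}$, hence
\[
f \circ g \circ \phi_f \;=\; \phi_f \circ M_{\lambda_f \lambda_g} \;=\; g \circ f \circ \phi_f,
\]
so $fg$ and $gf$ agree on the image of $\phi_f$, which is dense in $\Ps^1$. Being endomorphisms of $\Ps^1$, they must be equal. I expect the main obstacle to be the rigidity step: extracting $\phi_g = \phi_f \circ M_\tau$ from the mere equality $\mu_f = \mu_g$ requires controlling the global geometry of the entire functions $\phi_f, \phi_g$ purely from the datum of their common invariant measure, which is precisely where the delicate analyses cited to Levin and Ye come in.
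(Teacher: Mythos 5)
The paper does not actually prove this lemma: it is imported wholesale from \cite[Lemma 2]{Lev90} and \cite[Theorem 1.5]{Ye15}, so there is no in-paper argument to compare against. Your sketch does reproduce the skeleton of the cited proofs — linearize at the common fixed point, show that a single Poincar\'e function linearizes both maps, and conclude from the commutativity of $M_{\lambda_f}$ and $M_{\lambda_g}$ together with the density of the image of $\phi_f$ — and your closing computation is correct as written.

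The genuine gap is the middle step, which is the entire content of the lemma. You assert that the ``scaling laws'' satisfied by $\phi_f^{*}\mu$ and $\phi_g^{*}\mu$ plus ``rigidity for this type of scaling equation'' force $\phi_g=\phi_f\circ M_\tau$. But each pullback measure only satisfies a scaling relation under its \emph{own} multiplier ($M_{\lambda_f}^{*}(\phi_f^{*}\mu)=\deg(f)\,\phi_f^{*}\mu$, and likewise for $g$); nothing a priori relates $\phi_f^{*}\mu$ to $M_{\lambda_g}$. Moreover, with the normalization $\phi_f'(0)=\phi_g'(0)=1$ your claim amounts to $\phi_f=\phi_g$, i.e.\ that the germ $h=\phi_f^{-1}\circ\phi_g$ (tangent to the identity) is the identity; the only available relation is $h^{*}(\phi_f^{*}\mu)=\phi_g^{*}\mu$, which is circular unless one already knows the two measure germs agree. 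The actual argument of Levin and Ye proves instead that the germ $\sigma=\phi_f^{-1}\circ g\circ\phi_f$ at $0$ is linear, and this requires real input: the local structure (potential) of $\mu$ at $p$, the homogeneity of $\phi_f^{*}\mu$ under $M_{\lambda_f}$, and a renormalization argument showing that a biholomorphic germ compatible with such a homogeneous measure equals its linear part. There is no off-the-shelf ``rigidity for scaling equations'' that substitutes for this. Separately, your reduction for the non-repelling case fails: a nearby common repelling \emph{periodic} point would only yield commutation of suitable iterates of $f$ and $g$, and there is no way to pass back from $f^m g^n = g^n f^m$ to $fg=gf$; the parabolic and Cremer cases need their own treatment (or the hypothesis must be strengthened to a repelling fixed point, which is all the paper ever uses).
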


The following proposition gives two different characterizations of $C(f^{\infty})$. It is essentially a reformulation of \cite[Theorem 1.5]{Ye15}. Its main ingredients are Ritt's theorem \cite{Rit23}, Lemma \ref{lem1} above, and a deep rigidity result of X. Yuan and S.-W. Zhang \cite[Theorem 1.4]{YZ21a}:

\begin{proposition}
    \label{prop1}
    Let $f\in\End(\Ps^{1})^{+}$ be a non-linear, non-exceptional endomorphism, and let $g\in\End(\Ps^{1})^{+}$ be a non-linear endomorphism. The following are equivalent:
    \begin{enumerate}
        \item $g\in C(f^{\infty})$.
        \item There exist $m,n\geq1$ such that $g^{m}=f^{n}$.
        \item $\Per(g)=\Per(f)$.
    \end{enumerate}
    Moreover, a linear map $\sigma$ is contained in $C(f^{\infty})$ if and only if $\sigma(\Per(f))=\Per(f)$.
\end{proposition}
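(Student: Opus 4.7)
My plan is to close the cycle (ii)$\Rightarrow$(i)$\Rightarrow$(ii) together with (ii)$\Leftrightarrow$(iii), with the main work in (iii)$\Rightarrow$(ii). The easy directions come essentially for free. For (ii)$\Rightarrow$(i): if $g^{m}=f^{n}$, then $g$ commutes with its own power $g^{m}=f^{n}$, so $g\in C(f^{n})\subset C(f^{\infty})$. For (ii)$\Rightarrow$(iii): since the periodic points of any endomorphism coincide with those of any of its iterates, one reads off $\Per(g)=\Per(g^{m})=\Per(f^{n})=\Per(f)$. For (i)$\Rightarrow$(ii): from $g\in C(f^{N})$ I will apply Ritt's theorem to the commuting pair $(g,f^{N})$; the iterate $f^{N}$ remains non-linear and non-exceptional since both classes are stable under iteration, and the discussion of Subsection \ref{ss21} together with the rigidity of the measure of maximal entropy of exceptional maps will rule out the possibility that $g$ itself is exceptional, so Ritt delivers $m,n\geq 1$ with $g^{m}=(f^{N})^{n}=f^{Nn}$.

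The main obstacle is (iii)$\Rightarrow$(ii), where the deep rigidity result of Yuan and Zhang \cite[Theorem 1.4]{YZ21a} enters. From $\Per(g)=\Per(f)$ the two maps share infinitely many (pre)periodic points, and Yuan--Zhang will promote this to the equality of measures of maximal entropy $\mu_{f}=\mu_{g}$, and hence to the equality of Julia sets $J(f)=J(g)=:J$. Density of the periodic points of $f$ in $J$ allows me to pick a common periodic point $p\in J$, together with exponents $a,b\geq 1$ satisfying $f^{a}(p)=g^{b}(p)=p$. The iterates $f^{a}$ and $g^{b}$ are then two non-linear, non-exceptional endomorphisms sharing both the Julia set $J$ and the fixed point $p\in J$; Lemma \ref{lem1} forces them to commute, and a final application of Ritt's theorem to this commuting pair yields $m',n'\geq 1$ with $g^{bm'}=f^{an'}$, which is precisely (ii).

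For the moreover statement, the forward direction is automatic: any linear $\sigma\in C(f^{N})$ must permute the set $\Per(f^{N})=\Per(f)$. Conversely, given a linear $\sigma$ with $\sigma(\Per(f))=\Per(f)$, I will form the conjugate $g:=\sigma f\sigma^{-1}$, which is again non-linear and non-exceptional and satisfies $\Per(g)=\sigma(\Per(f))=\Per(f)$. The main part of the proposition applied to $g$ then gives $g^{m}=f^{n}$ for some $m,n\geq 1$; comparing degrees forces $\deg(f)^{m}=\deg(f)^{n}$, hence $m=n$, so $\sigma f^{m}\sigma^{-1}=f^{m}$ and therefore $\sigma\in C(f^{m})\subset C(f^{\infty})$.
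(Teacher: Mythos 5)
Your proof is correct and follows essentially the same route as the paper: Ritt's theorem for (i)$\Rightarrow$(ii), the trivial identification of periodic sets for (ii)$\Rightarrow$(iii), and Yuan--Zhang rigidity plus Lemma \ref{lem1} applied to iterates with a common fixed point in the common Julia set to close the cycle, with the same conjugation trick for the linear statement. The only (harmless) differences are that you close the loop via (iii)$\Rightarrow$(ii) with an extra application of Ritt rather than the paper's direct (iii)$\Rightarrow$(i), and that you explicitly justify the non-exceptionality of $g$ before invoking Ritt, a point the paper leaves implicit here and handles via Lemma \ref{lem2} elsewhere.
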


\begin{proof}
    If $g$ commutes with $f^{n}$ for some $n\geq1$, then by Ritt's theorem \cite{Rit23} there exist $\ell,m$ such that $g^{\ell}=f^{mn}$: this establishes $(i)\Rightarrow(ii)$. If $g^{m}=f^{n}$ for some integers $m,n\geq1$, then we have the following equality of periodic sets: $\Per(g)=\Per(g^{m})=\Per(f^{n})=\Per(f)$. This establishes $(ii)\Rightarrow(iii)$. If $\Per(g)=\Per(f)$, then some iterates $g^{m}$ and $f^{n}$ have a common fixed point in their common Julia set. By \cite[Theorem 1.4]{YZ21a}, $g^{m}$ and $f^{n}$ also have a common measure of maximal entropy so by Lemma \ref{lem1} they commute, and hence $g\in C(f^{\infty})$. This establishes $(iii)\Rightarrow(i)$. 

    If a linear map $\sigma$ is in $C(f^{\infty})$ then $\sigma f$ is in $C(f^{\infty})$, so by $(i)\Rightarrow(iii)$, we have $\Per(\sigma f)=\Per(f)$. Therefore, $\sigma(\Per(f))=(\sigma f)(\Per(f))=\Per(f)$. Conversely, suppose that $\sigma(\Per(f))=\Per(f)$. We have $f^{m}(z)=z$ for some $m$ if and only if $f^{n}(\sigma(z))=\sigma(z)$ for some $n$, and thus $\Per(\sigma^{-1}f\sigma)=\Per(f)$. By $(iii)\Rightarrow(ii)$ there exists some $\ell$ such that $\sigma^{-1}f^{\ell}\sigma=f^{\ell}$, which means that $\sigma\in C(f^{\infty})$.
\end{proof}

\subsection{Exceptional maps}\label{ss23} A \textit{power map} is an endomorphism of the form $z\mapsto z^{\pm d}$ where $d\geq1$. For convenience we will also refer to any endomorphism of the form $z\mapsto \zeta z^{\pm d}$, where $\zeta$ is a root of unity, as a power map. \textit{Chebyshev polynomials} are defined by induction as follows:
\[T_{1}(z)=z,\quad T_{2}(z)=z^{2}-2, \quad T_{d+1}(z)=zT_{d}(z)-T_{d-1}(z).\]
Equivalently, $T_{d}$ is the unique polynomial such that $T_{d}(z+z^{-1})=z^{d}+z^{-d}$. For convenience we will also refer to the family $(-T_{d})_{d\geq1}$ as Chebyshev polynomials. Note that the $T_{d}$ are defined over the integers.\\

An endomorphism $f\in\End(\Ps^{1})^{+}$ is called a \textit{Lattès map} if there exists an elliptic curve $E$ and dominant morphisms $\pi:E\rightarrow\Ps^{1}$, $\varphi:E\rightarrow E$ such that $\pi\varphi=f\pi$. By \cite[Theorem 3.1]{Mil06}, for a given Lattès map $f$, we may choose $\pi$ to be a quotient map $E\rightarrow E/\Gamma\cong\Ps^{1}$ where $\Gamma$ is a cyclic group of automorphisms of $E$ of order $2,3,4$, or $6$. Moreover, $\varphi$ can be written in affine form $z\mapsto [\alpha](z)+t$ where $\alpha$ is an integer or quadratic integer in the endomorphism ring of $E$, and $t\in E$. If we fix a generator $[\omega]$ of $\Gamma$, the relation $\pi\varphi=f\pi$ implies that
\[\forall z\in E,\quad [\alpha]([\omega](z))+t=[\omega^{k}]([\alpha](z)+t)\]
for some integer $k$. Comparing differentials on both sides we see that $k=1$ and thus $t=[\omega](t)$: the translation part of a Lattès map is necessarily a $[1-\omega]$-torsion point.

Fix an elliptic curve $E$ and a group of automorphisms $\Gamma=\langle[\omega]\rangle$ of $E$. Denote by $R$ the ring of endomorphisms of $E$ and by $E[1-\omega]$ its set of $[1-\omega]$-torsion points. We define $L(E,\Gamma)$ as the semigroup of Lattès maps $\{\ell_{\alpha,t}\;;\;\alpha\in R\backslash\{0\},\;t\in E[1-\omega]\}$, where $\ell_{\alpha,t}$ fits into the following diagram:
\begin{equation}
    \label{eq1}
    \begin{tikzcd}
	E &&& E \\
	{E/\Gamma} & {\Ps^{1}} & {\Ps^{1}} & {E/\Gamma}
	\arrow["{z\mapsto[\alpha](z)+t}", from=1-1, to=1-4]
	\arrow[from=1-1, to=2-1]
	\arrow[from=1-4, to=2-4]
	\arrow["\cong"{description}, draw=none, from=2-1, to=2-2]
	\arrow["{\ell_{\alpha,t}}", from=2-2, to=2-3]
	\arrow["\cong"{description}, draw=none, from=2-3, to=2-4]
    \end{tikzcd}
\end{equation}
The semigroup $L(E,\Gamma)$ is well defined up to the choice of isomorphism $E/\Gamma\cong\Ps^{1}$, that is, up to simultaneous conjugation by an automorphism of $\Ps^{1}$.

\begin{remark}
    The term Lattès map usually refers to an endomorphism of degree at least two. By abuse of terminology, we will also call Lattès map any linear endomorphism that fits into diagram \eqref{eq1}.
\end{remark}

We will introduce some auxiliary notation for the following lemma. For a non-linear $f\in\End(\Ps^{1})^{+}$, we define
\[P(f)=\{g\in\End(\Ps^{1})^{+}\;|\; g(\PrePer(f))=\PrePer(f)\}.\]
By \cite[Theorem 1.3]{YZ21a}, $P(f)$ is the set of endomorphisms with the same set of preperiodic points as $f$, to which we also add the linear maps preserving $\PrePer(f)$. In particular, the elements $g\in P(f)$ satisfy the seemingly stronger property 
\[g^{-1}(\PrePer(f))=\PrePer(f).\]
If an endomorphism $g$ commutes with $f$, then it sends finite $f$-orbits to finite $f$-orbits, so $g\in P(f)$. For any $n\geq1$, $\PrePer(f^{n})=\PrePer(f)$. From these two observations we conclude that $C(f^{\infty})\subset P(f)$. Finally, we remark that $P(f)$ coincides with the semigroup $E(f)$ defined in the introduction, except when $f$ is conjugate to a power map. This fact won't be needed in our proofs.

\begin{lemma}
    \label{lem2}
    Let $f\in\End(\Ps^{1})^{+}$ be a non-linear, exceptional endomorphism.
    \begin{enumerate}
        \item If $f$ is conjugate to a power map, then all elements in $P(f)$ are simultaneously conjugate to power maps.
        \item If $f$ is conjugate to a Chebyshev polynomial, then all elements in $P(f)$ are simultaneously conjugate to Chebyshev polynomials.
        \item If $f\in L(E,\Gamma)$ for some elliptic curve $E$ and group of automorphisms $\Gamma$, then $P(f)=L(E,\Gamma)$.
    \end{enumerate}
\end{lemma}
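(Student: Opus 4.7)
The plan is to treat the three cases uniformly via the covering-space structure underlying each exceptional family. In each case $f$ is obtained by descent from a group-theoretic map on a cover $\pi : X \to \Ps^{1}$: for power maps, $\pi = \mathrm{id}$ and the relevant group is $\mathbb{G}_{m}$; for Chebyshev polynomials, $\pi(z) = z + z^{-1}$; for Lattès maps, $\pi : E \to E/\Gamma \cong \Ps^{1}$. Crucially, in each case $\PrePer(f) = \pi(X_{\mathrm{tors}})$, the image under $\pi$ of the torsion of the underlying group variety.

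First I would reduce to the non-linear case. For non-linear $g \in P(f)$, the discussion preceding the lemma (invoking \cite[Theorem~1.3]{YZ21a}) gives $\PrePer(g) = \PrePer(f)$, so $g^{-1}(\PrePer(f)) = \PrePer(f)$. The central step is to lift $g$ through $\pi$ to a morphism $\tilde g : X \to X$ with $\pi \circ \tilde g = g \circ \pi$. Given such a lift, preservation of $X_{\mathrm{tors}}$ (immediate from $g^{-1}(\PrePer(f)) = \PrePer(f)$ together with $\pi^{-1}(\pi(X_{\mathrm{tors}})) = X_{\mathrm{tors}}$) forces $\tilde g$ to be a group-scheme endomorphism up to a torsion translation — of the form $\zeta z^{\pm d}$ on $\mathbb{G}_{m}$, or $[\alpha] z + t$ with $t \in E_{\mathrm{tors}}$ on $E$. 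The descent condition $\pi \circ \tilde g = g \circ \pi$ then imposes the relevant equivariance: for Chebyshev, $\tilde g$ being a group endomorphism of $\mathbb{G}_{m}$ already guarantees compatibility with the deck transformation $z \mapsto z^{-1}$, forcing $\tilde g(z) = \pm z^{\pm d}$ and hence $g = \pm T_{d}$; for Lattès, the same computation as in the paragraph preceding the lemma forces $t \in E[1-\omega]$, yielding $g \in L(E, \Gamma)$. A linear $\sigma \in P(f)$ is then handled by composition: $\sigma \circ f \in P(f)$ is non-linear and hence belongs to the target family by the above, so $\sigma$ does too.

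The main obstacle I anticipate is producing the lift $\tilde g$. The natural approach is to pull back the Galois cover $\pi$ along $g$ and show that the resulting étale cover of the complement of the branch locus is trivial as a $\Gamma$-torsor. This reduces to showing that $g$ preserves the branch locus $B$ of $\pi$: for Lattès maps, $B$ consists of the four images under $\pi$ of the $\Gamma$-fixed points of $E$, which are distinguished within $\PrePer(f)$ as the preperiodic points of maximal local degree for iterates of $f$ — a property intrinsic to $\PrePer(f)$ and hence preserved by any $g \in P(f)$. The power and Chebyshev cases are simpler, with $B = \{0, \infty\}$ and $B = \{\pm 2\}$ respectively, and can be handled by remarking that these are the only Fatou-preperiodic points of $f$, a property respected by any $g$ with the same preperiodic set.
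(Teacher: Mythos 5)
Your treatment of cases (i) and (ii) is essentially the paper's argument: since the Julia set $J$ is the derived set of $\PrePer(f)$ and $g^{-1}(\PrePer(f))=\PrePer(f)$, the finite set $\PrePer(f)\setminus J$ (namely $\{0,\infty\}$, resp.\ $\{\infty\}$) is totally invariant under $g$, which pins down the form of $g$; the paper then quotes \cite[Theorem 1.4.1]{Bea91} for the Chebyshev case where you would lift through $z\mapsto z+z^{-1}$, and either route works. For case (iii), however, the paper simply invokes \cite[Theorem 27]{KS07} and \cite[Lemma 4.9]{BHPT24}, whereas you propose to reprove the lifting statement, and your sketch of that step has a genuine gap. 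The reduction ``the pulled-back $\Gamma$-torsor is trivial because $g$ preserves the branch locus $B$'' is not correct: to lift $g\circ\pi$ through $\pi$ you need the orbifold covering condition $\nu(g(x))=\deg_{x}(g)\,\nu(x)$ at every point, i.e.\ compatibility of the local degrees of $g$ with the orbifold weights, not merely set-theoretic preservation of $B$. Note that for a non-linear $g$ the condition $g^{-1}(B)=B$ is in fact impossible (four totally ramified points would violate Riemann--Hurwitz), so the correct statement must be that $g(B)\subset B$ \emph{and} every point of $g^{-1}(B)\setminus B$ is critical for $g$ with the right local degree; if some $x\notin B$ has $g(x)\in B$ and $\deg_{x}(g)$ odd, no lift exists. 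Moreover, your proposed way of forcing this --- identifying $B$ inside $\PrePer(f)$ as ``the preperiodic points of maximal local degree for iterates of $f$'' --- is a property of the map $f$, not of the set $\PrePer(f)$, so it is not preserved by an arbitrary $g$ with $g^{-1}(\PrePer(f))=\PrePer(f)$ without further argument; and since $\PrePer(f)$ is dense in $\Ps^{1}$ for a Lattès map, the accumulation-point trick that isolates the distinguished points in cases (i) and (ii) is unavailable here. This is exactly the content of the cited results, so the hard part of case (iii) remains unproved in your sketch.

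Two smaller points. Case (iii) asserts an equality $P(f)=L(E,\Gamma)$, and you only address the inclusion $P(f)\subset L(E,\Gamma)$; the converse requires knowing $E_{\mathrm{tors}}/\Gamma=\PrePer(f)$ and that the translation part of a lift is a torsion point, as in the paper. Your reduction of linear elements to non-linear ones via $\sigma\mapsto\sigma f$ is fine, and your observation that a lift preserving $E_{\mathrm{tors}}$ must be of the form $z\mapsto[\alpha](z)+t$ with $t$ torsion, with the descent relation forcing $t\in E[1-\omega]$, matches the paper's setup in Subsection \ref{ss23}.
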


\begin{proof}
    Suppose that, up to conjugation, $f$ is a power map, and let $g\in P(f)$. The Julia set $J$ of $f$ is the set of accumulation points of $\PrePer(f)$, so $g^{-1}(J)=J$. Moreover, $\PrePer(f)\backslash J=\{0,\infty\}$, and thus $g^{-1}(\{0,\infty\})=\{0,\infty\}$. We deduce that $g$ is of the form $z\mapsto\zeta z^{\pm d}$ where $\zeta\in\C^{\times}$, $d\geq1$. Finally,
    \[\zeta=g(1)\in g(\PrePer(f))=\PrePer(f)\]
    so $\zeta$ is a root of unity, and hence $g$ is a power map.
    
    Suppose that, up to conjugation, $f$ is a Chebyshev polynomial, and let $g\in P(f)$. As before, denoting by $J=[-2,2]$ the Julia set of $f$, we have $g^{-1}(J)=J$, and since $\PrePer(f)\backslash J=\{\infty\}$, we have $g^{-1}(\{\infty\})=\{\infty\}$. In particular, $g$ is a polynomial and $g^{-1}([-2,2])=[-2,2]$: by \cite[Theorem 1.4.1]{Bea91}, $g$ is a Chebyshev polynomial.
    
    Suppose that $f\in L(E,\Gamma)$ and let $g\in P(f)$. Then $g(\PrePer(f))=\PrePer(f)$ so we may apply \cite[Theorem 27]{KS07}, \cite[Lemma 4.9]{BHPT24} to conclude that $g$ is a Lattès map fitting into the same diagram \eqref{eq1} as $f$, that is, $g\in L(E,\Gamma)$. Note that \cite[Theorem 27]{KS07} is stated for endomorphisms $g$ of degree at least $2$, but the proof holds for linear endomorphisms without modification. Conversely, let $g\in L(E,\Gamma)$ and denote its lift by $\varphi:E\rightarrow E$. The set $E_{\mathrm{tors}}$ of torsion points of $E$ is fully invariant by $\varphi$ (recall that the translation part of $\varphi$ is a torsion point) so $E_{\mathrm{tors}}/\Gamma$ is fully invariant by $g$. Since $E_{\mathrm{tors}}/\Gamma=\PrePer(f)$ (\cite[Corollary 31]{KS07}) we obtain $g\in P(f)$.
\end{proof}

\section{Finite orbits of \texorpdfstring{$C(f^{\infty})$}{C(f∞)}}

The purpose of this section is to prove the following proposition:
\begin{proposition}
    \label{prop2}
    Let $f$ be a non-linear, non-exceptional endomorphism of $\Ps^{1}$ defined over $\C$. There exist infinitely many finite $C(f^{\infty})$-orbits in $\Per(f)$.
\end{proposition}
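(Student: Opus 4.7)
The plan is to argue by contradiction along the lines of the arithmetic equidistribution strategy of V.~Huguin \cite{Hug23}. Suppose that only finitely many $C(f^\infty)$-orbits in $\Per(f)$ are finite, and let $F \subset \Per(f)$ denote their union; every $z \in \Per(f) \setminus F$ then has an infinite $C(f^\infty)$-orbit.

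The first step is to descend to an arithmetic setting. Both $f$ and a countable family of elements of $C(f^\infty)$ witnessing the infinitude of the remaining orbits have their coefficients in a finitely generated subring $R \subset \C$. Via a generic specialization $R \to \bar{\Q}$, one obtains a still non-exceptional endomorphism over a number field $\K$, with a centralizer semigroup whose structure reflects the same finiteness assumption (non-exceptionality being Zariski open, and finite orbits being preserved on a Zariski open set of specializations); the periodic points of $f$ then become algebraic over $\K$.

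Next, I would set up the adelic machinery. By Proposition \ref{prop1}, each non-linear $g \in C(f^\infty)$ satisfies $g^m = f^n$ for some $m,n \geq 1$, so a direct computation of the canonical heights using $g^m = f^n$ shows that $g$ and $f$ determine the same function $\hat h_f$ on $\Ps^1(\bar{\K})$, and Subsection \ref{ss21} already identifies the associated equilibrium measures, which by \cite{YZ21a} match at every place. Yuan's theorem on equidistribution of small points then ensures that any sequence of Galois orbits $\mathrm{Orb}_{\mathrm{Gal}}(z_n) \subset \Per(f)$ of growing cardinality equidistributes to $\mu_{f,v}$ at every place $v$. Huguin's contribution in \cite{Hug23} is to exploit this in the presence of commuting endomorphisms: using the assumed abundance of infinite orbits, one would extract a sequence $(g_n, z_n)$ with $g_n \in C(f^\infty)$ and $z_n$ of growing Galois degree, such that the images $g_n(z_n)$ have controlled height and degree, and two limiting equilibrium measures (for $f$ and for the limit configuration of the $g_n(z_n)$) must both agree with $\mu_f$ --- a rigidity that, via a potential-theoretic comparison at an archimedean place, forces $f$ into the exceptional locus, contradicting the hypothesis.

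The main obstacle is precisely this contradiction step: producing the correct sequence $(g_n, z_n)$ and tracking the adelic equilibrium potentials across all places well enough to invoke the rigidity of Yuan--Zhang \cite{YZ21a} and derive the exceptionality of $f$. Once the contradiction is obtained, $C(f^\infty)$ must admit infinitely many finite orbits in $\Per(f)$, which is the desired conclusion.
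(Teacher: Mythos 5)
Your proposal correctly identifies the general strategy (Huguin-style arithmetic equidistribution after reduction to a number field), but the step you yourself flag as "the main obstacle" is in fact the heart of the matter, and the mechanism you sketch for it would not work. The contradiction does not come from a rigidity statement about two equilibrium measures agreeing with $\mu_f$ --- such an agreement is automatic for elements of $C(f^{\infty})$ and carries no information about exceptionality. The two missing ideas are: (1) a \emph{multiplier identity along orbits}: if $g$ commutes with $f^{n}$ and $z$ is $f$-periodic of period $p$ with multiplier $\lambda\neq0$, the chain rule applied to $f^{np}g=gf^{np}$ gives $(f^{np})'(g(z))=(f^{np})'(z)$, so every point of the orbit $C(f^{\infty})\cdot z$ (and of its Galois orbit, since the common multiplier is Galois-fixed once $\lambda$ is adjoined to the base field) lies on a cycle with the \emph{same characteristic exponent} $\chi_{f}(P)=\frac{1}{p}\log|\lambda|$. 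Feeding an infinite orbit into Autissier's equidistribution theorem then forces $\chi_{f}(P)\leq L_{f}$, where $L_{f}=\int\log\|f'\|\,d\mu_{f}$ is the Lyapunov exponent. (2) \emph{Zdunik's criterion}: a non-exceptional map has infinitely many cycles with $\chi_{f}(P)>L_{f}$. If all but finitely many orbits were infinite, (1) would contradict (2). Without the multiplier identity there is nothing to equidistribute against, and without Zdunik's theorem there is no way to convert the inequality into exceptionality.

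Your reduction to a number field is also too optimistic. What must be shown is not that finite orbits stay finite under a generic specialization, but that \emph{infinite} orbits $C(f^{\infty})\cdot z$ remain infinite after specializing at \emph{every} point $b'$ of the parameter variety (the exceptionality of all specializations $f_{b}$ is what is contradicted, using that the exceptional locus is Zariski closed --- so one cannot discard a proper closed subset of bad parameters for free). This requires a genuine argument: one bounds $\#(\mathcal{O}\cap\Per_{n}(f))$ by $\#\mathcal{O}_{b'}+2d-2$ using upper semi-continuity of intersection multiplicities together with the Leau--Fatou count of critical points attracted by parabolic cycles, and one must also rule out period drop at the specialized point. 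As written, your proposal names the right references but leaves both the specialization step and the contradiction step as acknowledged gaps, so it does not constitute a proof.
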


We begin by proving the assertion under the assumption that $f$ is defined over a number field (Proposition \ref{prop3} $(ii)$), and then use a specialization argument to prove it in the general case (Proposition \ref{prop4} $(ii)$).

Throughout this section, $f$ will denote a non-exceptional endomorphism of $\Ps^{1}$ of degree $d\geq2$.

\subsection{The Lyapunov exponent} We define the \textit{Lyapunov exponent} of $f$ as
\[L_{f}:=\int_{\Ps^{1}(\C)}\log\|f'\|\;d\mu_{f}\]
where $\mu_{f}$ is the measure of maximal entropy of $f$ and $\|f'\|$ denotes the norm of the differential of $f$ with respect to the round metric. We have $L_{f}>0$ as observed in \cite[Lemma  II.3]{Mañ83}

Let $P$ be a periodic cycle of $f$, of size $p$ and multiplier $\lambda$ (i.e. $\lambda=(f^{p})'(z)$ for any $z\in P$). Let $\mu_{P}$ be the uniform measure on $P$. We define the \textit{characteristic exponent} of $f$ along $P$ as
\[\chi_{f}(P):=\int_{\Ps^{1}(\C)}\log\|f'\|\;d\mu_{P}=\frac{1}{p}\log|\lambda|.\]

We will make use of the following criterion for exceptionality:

\begin{theorem}[{\cite[Proposition 4]{Zdu14}, \cite[Remark 10]{Hug23}}]
    \label{theo3}
    If $f$ is a non-linear, non-exceptional endomorphism of $\Ps^{1}$, then the inequality $\chi_{f}(P)> L_{f}$ holds for infinitely many periodic cycles $P$.
\end{theorem}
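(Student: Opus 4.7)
The plan is to argue by contradiction: assume that $\chi_{f}(P) \leq L_{f}$ for all but finitely many periodic cycles $P$, and derive that $f$ must be exceptional.

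The first ingredient is the classical equidistribution of periodic points, due to Lyubich and Freire-Lopes-Mañé: the probability measures $\frac{1}{d^{n}+1}\sum_{z \in \mathrm{Fix}(f^{n})}\delta_{z}$ converge weakly to $\mu_{f}$ as $n \to \infty$. Since $\log\|f'\|$ is bounded and continuous on $\Ps^{1}(\C)$, testing against this convergence yields
\[\frac{1}{d^{n}+1} \sum_{z \in \mathrm{Fix}(f^{n})} \log\|f'(z)\| \longrightarrow L_{f}.\]
By invariance of $\mathrm{Fix}(f^{n})$ under $f$ and the chain rule, the left-hand side can be rewritten as $\frac{1}{d^{n}+1}\sum_{P \subset \mathrm{Fix}(f^{n})}|P|\,\chi_{f}(P)$, a weighted average of characteristic exponents over cycles $P$ whose period divides $n$.

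Under the contrary assumption, for $n$ large enough every periodic cycle in $\mathrm{Fix}(f^{n})$ except finitely many satisfies $\chi_{f}(P) \leq L_{f}$. Since the total weight $\sum_{P \subset \mathrm{Fix}(f^{n})} |P|$ equals $d^{n}+1$, the above convergence forces the non-negative quantities $|P|(L_{f}-\chi_{f}(P))$ to sum to $o(d^{n})$. In particular, for every $\varepsilon>0$ the fraction of $z \in \mathrm{Fix}(f^{n})$ with $\chi_{f}(\mathrm{orbit}(z)) \leq L_{f}-\varepsilon$ tends to zero: the characteristic exponents at periodic orbits concentrate at $L_{f}$ in density.

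This concentration is a strong rigidity constraint on the dynamics of $f$. Invoking the main results of Zdunik \cite{Zdu14}---which relate the Hausdorff dimension of the Julia set, the Lyapunov exponent, and the existence of a conformal measure proportional to $\mu_{f}$---one concludes that $\mu_{f}$ coincides, up to a scalar, with the $(\log d / L_{f})$-dimensional Hausdorff measure on its support, a situation that forces $f$ to be exceptional. The main obstacle lies precisely in this final step: the passage from the density statement about characteristic exponents to the rigidity of $\mu_{f}$ requires the full thermodynamic formalism for rational maps developed by Przytycki, Urbanski and others. The arithmetic alternative suggested by \cite[Remark 10]{Hug23} first reduces, via specialization, to the case where $f$ is defined over a number field, and then applies a Yuan-Zhang equidistribution theorem to the Galois orbits of the multipliers $(f^{p})'(z)$, bypassing the Hausdorff-dimension machinery at the cost of a detour through arithmetic dynamics.
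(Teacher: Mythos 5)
First, a point of comparison: the paper does not prove Theorem \ref{theo3} at all. It is imported as a known result from \cite{Zdu14} and \cite{Hug23} and then used as a black box in the proofs of Propositions \ref{prop3} and \ref{prop4}. Your proposal must therefore be judged as an attempted proof of Zdunik's result itself, and as such it has a genuine gap that you partly acknowledge. The contradiction hypothesis together with the claim that the cycle-averaged exponents tend to $L_{f}$ only yields that the characteristic exponents concentrate at $L_{f}$ in density; this is not by itself a contradiction. The entire content of the theorem is the rigidity implication ``concentration of periodic exponents at $L_{f}$ forces $f$ to be exceptional,'' and your proposal replaces that step with an appeal to ``the main results of Zdunik'' plus the admission that the passage ``requires the full thermodynamic formalism.'' In effect you are invoking the cited theorem to prove the cited theorem; the sketch is a plan, not a proof.

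There is also a technical misstep earlier in the argument. The function $\log\|f'\|$ is neither bounded nor continuous: it equals $-\infty$ at the critical points of $f$, so weak convergence of the periodic-point measures cannot be tested against it directly. Truncating by $\max(\log\|f'\|,t)$ and using monotone convergence, as the paper does in the proof of Proposition \ref{prop3}, only gives the one-sided bound that the $\limsup$ of the periodic averages is at most $L_{f}$; the lower bound, which is exactly what your concentration argument requires, demands control on how close period-$n$ points can approach the critical set, a nontrivial estimate in its own right. Finally, your closing paragraph misreads the role of \cite{Hug23} and of arithmetic equidistribution in this paper: the Autissier/Yuan--Zhang equidistribution is used to prove Proposition \ref{prop3}(i), namely that an infinite $C(f^{\infty})$-orbit forces $\chi_{f}(P)\leq L_{f}$, after which Theorem \ref{theo3} is applied as an independent input to conclude; it is not an alternative route to Theorem \ref{theo3} itself.
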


\subsection{Arithmetic equidistribution} Assume that $f$ is defined over a number field $K\subset \C$. In particular, the set $\PrePer(f)$ is defined over the algebraic closure $\bar{K}$ of $K$ in $\C$. We denote by $G_{K}$ the absolute Galois group $\mathrm{Gal}(\bar{K}/K)$. The proof of Proposition \ref{prop3} will hinge upon the following equidistribution result:

\begin{theorem}
    \label{theo4}
    Let $(E_{k})_{k\geq0}$ be a sequence of finite $G_{K}$-invariant sets contained in $\PrePer(f)$, whose union is infinite. For every $k\geq0$, let $\mu_{k}$ be the uniform measure on $E_{k}$. Then the sequence $(\mu_{k})_{k\geq0}$ converges weakly towards the measure of maximal entropy $\mu_{f}$: for every continuous function $\psi:\Ps^{1}(\C)\rightarrow\R$, we have
    \[\int_{\Ps^{1}(\C)}\psi\; d\mu_{k}\underset{k\rightarrow\infty}{\longrightarrow}\int_{\Ps^{1}(\C)}\psi\; d\mu_{f}.\]
\end{theorem}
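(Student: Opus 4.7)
The plan is to deduce the convergence $\mu_{k}\to\mu_{f}$ from the arithmetic equidistribution theorem for points of small canonical height on $\Ps^{1}$, as established by Baker--Rumely, Favre--Rivera-Letelier, and Chambert-Loir. The main ingredient is the canonical height $\hat h_{f}:\Ps^{1}(\bar K)\to\R_{\geq 0}$ attached to $f$, characterized by the functional equation $\hat h_{f}\circ f=d\,\hat h_{f}$ together with the estimate $\hat h_{f}-h=O(1)$, where $h$ is the standard Weil height; its zero locus is exactly $\PrePer(f)$. The relevant form of the equidistribution theorem states that for any sequence $(y_{n})_{n\geq 0}$ of pairwise distinct points of $\Ps^{1}(\bar K)$ with $\hat h_{f}(y_{n})\to 0$, the uniform probability measures on the Galois orbits $G_{K}\cdot y_{n}$ converge weakly to $\mu_{f}$ on $\Ps^{1}(\C)$.

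Since each $E_{k}$ is contained in $\PrePer(f)$, the smallness hypothesis $\hat h_{f}\equiv 0$ is satisfied identically on $\bigcup_{k}E_{k}$, so the nontrivial input is to extract a genericity statement from the hypothesis that $\bigcup_{k}E_{k}$ is infinite. Here I would appeal to Northcott's theorem: for every $B\geq 1$ the set $P_{B}:=\{x\in\PrePer(f):[K(x):K]\leq B\}$ is finite, because preperiodic points satisfy $h=\hat h_{f}+O(1)=O(1)$ and Northcott applies in each bounded-degree extension of $K$. Combined with the assumption that $\bigcup_{k}E_{k}$ is infinite, this forces the distinct Galois orbits appearing across the $E_{k}$ to have sizes tending to infinity, since only finitely many orbits of size at most $B$ are available in $\PrePer(f)$.

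The remaining step would be to decompose each $E_{k}=\bigsqcup_{i}O_{k,i}$ into its Galois orbits and to write $\mu_{k}=\sum_{i}\frac{|O_{k,i}|}{|E_{k}|}\,\nu_{O_{k,i}}$, where $\nu_{O}$ denotes the uniform probability measure on a finite set $O$. The equidistribution theorem applied to the large-orbit part of this convex combination gives the desired convergence, provided the contribution of bounded-size orbits is negligible. I expect this last combinatorial reduction to be the main obstacle: one must carefully show, using Northcott finiteness, that for every $B\geq 1$ the total weight $\sum_{|O_{k,i}|\leq B}\frac{|O_{k,i}|}{|E_{k}|}$ vanishes as $k\to\infty$, so that the equidistribution along the large-orbit part propagates to the full convergence $\mu_{k}\to\mu_{f}$ on $\Ps^{1}(\C)$.
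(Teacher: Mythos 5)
The paper does not actually prove this statement: it is quoted as a theorem of Autissier \cite{Aut01}, with \cite[Chapter 10]{BR10} cited for the dynamical formulation. Your proposal is therefore, by construction, a different route — a derivation from the pointwise small-height equidistribution theorem of Baker--Rumely, Favre--Rivera-Letelier and Chambert-Loir. The overall reduction is the standard one and is essentially sound: decompose each $E_{k}$ into Galois orbits, use Northcott (preperiodic points have bounded Weil height, so only finitely many have degree at most $B$ over $K$) to control the small orbits, and apply orbit-wise equidistribution to the rest.

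Two points need attention. First, the step you yourself flag is a genuine gap as written: the bound $\sum_{|O_{k,i}|\leq B}|O_{k,i}|/|E_{k}|\leq C_{B}/|E_{k}|$, with $C_{B}$ the number of preperiodic points of degree at most $B$, tends to $0$ only if $|E_{k}|\to\infty$, and this does \emph{not} follow from the hypothesis that $\bigcup_{k}E_{k}$ is infinite: a subsequence of the $E_{k}$ could equal a fixed singleton $\{z_{0}\}$ with $z_{0}\in\PrePer(f)\cap\Ps^{1}(K)$, in which case $\mu_{k}$ has a constant subsequence $\delta_{z_{0}}$ and the conclusion fails. So either the hypothesis must be read as $\#E_{k}\to\infty$ — which is what the cited references assume, and what holds in the paper's application after discarding repeated sets, precisely because of the Northcott finiteness you invoke — or the statement is literally false as phrased; your proof cannot close this gap because no proof can. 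Second, a smaller conflation: an orbit being large does not by itself make $\nu_{O}$ close to $\mu_{f}$. The correct intermediate statement is that for each continuous $\psi$ and each $\epsilon>0$, all but finitely many Galois orbits $O\subset\PrePer(f)$ satisfy $\left|\int\psi\,d\nu_{O}-\int\psi\,d\mu_{f}\right|\leq\epsilon$; otherwise one extracts an infinite sequence of pairwise distinct preperiodic points (one per bad orbit), all of canonical height $0$, whose orbit measures fail to converge, contradicting the pointwise theorem. The finitely many exceptional orbits then comprise finitely many points, whose total weight in $E_{k}$ is $O(1/\#E_{k})$, and the convex-combination estimate closes the argument. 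With these two repairs — strengthening or correctly interpreting the hypothesis, and replacing "large orbit" by "non-exceptional orbit" — your derivation is complete.
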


Theorem \ref{theo4} was proven by P. Autissier in \cite{Aut01} in a more general setting; see also \cite[Chapter 10]{BR10} for a statement of Autissier's result in the dynamical setting.

\begin{proposition}
    \label{prop3}
    Let $f$ be a non-linear, non-exceptional endomorphism defined over a number field.
    \begin{enumerate}
        \item Let $z\in\Per(f)$ and denote by $P$ its cycle. If the orbit $\mathcal{O}:=C(f^{\infty})\cdot z$ is infinite, then $\chi_{f}(P)\leq L_{f}$.
        \item There exist infinitely many finite $C(f^{\infty})$-orbits in $\Per(f)$.
    \end{enumerate}
\end{proposition}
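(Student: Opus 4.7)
The plan is to establish (i) first and deduce (ii) by combining it with Theorem \ref{theo3}. For (i), assume that $\mathcal{O}=C(f^{\infty})\cdot z$ is infinite. If $\chi_{f}(P)\leq 0$ the inequality $\chi_{f}(P)\leq L_{f}$ is automatic since $L_{f}>0$, so the substantive case is $\chi_{f}(P)>0$. I would handle it by combining a local derivative computation along $\mathcal{O}$ with the arithmetic equidistribution of Theorem \ref{theo4}.

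The key local input is the following. Fix $g\in C(f^{n})$ with $g(z)\in\mathcal{O}$, set $z':=g(z)$, let $p'$ denote its $f$-period, and let $k\geq 1$ be the local degree of $g$ at $z$, so that locally $g(z+\epsilon)=z'+c\epsilon^{k}+O(\epsilon^{k+1})$ with $c\neq 0$. Pick $N$ divisible by $n$, $p$ and $p'$, so that $g$ commutes with $f^{N}$ and $z,z'$ are both fixed by $f^{N}$. Expanding the identity $g\circ f^{N}=f^{N}\circ g$ at $z$ and identifying the $\epsilon^{k}$-coefficients on both sides (which, given $\lambda_{z}\neq 0$, forces $(f^{N})'(z')\neq 0$ and hence $\lambda_{z'}\neq 0$) yields
\[\lambda_{z}^{Nk/p}=\lambda_{z'}^{N/p'},\quad\text{and hence}\quad\chi_{f}(P')=k\cdot\chi_{f}(P),\]
where $P'$ denotes the cycle of $z'$. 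Thus every cycle $P'\subset\mathcal{O}$ satisfies $\chi_{f}(P')\geq\chi_{f}(P)$.

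To invoke Theorem \ref{theo4}, I would extend the base to the number field $K':=K(z)$; then $z$ and the elements of $C(f^{\infty})$ are all defined over $K'$, so $\mathcal{O}$ itself is $G_{K'}$-invariant. Since $f\in C(f^{\infty})$, the set $\mathcal{O}\subset\Per(f)$ is a union of $f$-cycles, and one may exhaust it by an increasing sequence of finite $G_{K'}$-invariant unions of cycles $E_{k}$. Autissier's theorem yields $\mu_{E_{k}}\to\mu_{f}$ weakly, and combining with the local computation gives
\[\int_{\Ps^{1}(\C)}\log\|f'\|\,d\mu_{E_{k}}=\frac{1}{|E_{k}|}\sum_{P'\subset E_{k}}|P'|\chi_{f}(P')\geq\chi_{f}(P),\]
so passing to the limit proves $L_{f}\geq\chi_{f}(P)$. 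Part (ii) then follows at once from (i) and Theorem \ref{theo3}: each of the infinitely many cycles with $\chi_{f}>L_{f}$ must lie in a finite $C(f^{\infty})$-orbit, and since any finite orbit contains only finitely many cycles, infinitely many distinct finite orbits must exist in $\Per(f)$.

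The main technical delicacy I anticipate is the passage to the limit in $\int\log\|f'\|\,d\mu_{E_{k}}$, since $\log\|f'\|$ has $-\infty$ singularities at the critical points of $f$ and is therefore not continuous. This is standard in the potential-theoretic formulation of equidistribution, and can in any case be handled by excising the finitely many cycles that meet the critical set of $f$ from the exhaustion $E_{k}$, which does not affect the limiting integral.
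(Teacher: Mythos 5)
Your proof is correct and follows the same overall strategy as the paper's (an equidistribution argument in the spirit of Huguin, combined with Theorem \ref{theo3} and the truncation trick for $\log\|f'\|$), but two steps are executed differently and the differences are worth recording. First, to control the characteristic exponents of the cycles in $\mathcal{O}$, the paper proves the \emph{equality} $\chi_{f}(P')=\chi_{f}(P)$ via the chain rule; this requires knowing that $z$ is not a critical point of $g$, which the paper deduces from Proposition \ref{prop1} $(i)\Rightarrow(ii)$, i.e.\ from Ritt's theorem. Your local-degree expansion gives $\chi_{f}(P')=k\,\chi_{f}(P)$ with $k\geq1$ and needs no such input; the price is the (harmless) case split on the sign of $\chi_{f}(P)$, since for $\chi_{f}(P)<0$ the inequality would point the wrong way. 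Second, the paper keeps the base field $K$ small (containing $f$ and $\lambda$, but not necessarily $z$) and equidistributes the full Galois orbit $G_{K}\cdot\mathcal{O}$, whereas you enlarge the base field to $K(z)$ and equidistribute $\mathcal{O}$ itself; both are legitimate. One imprecision in your version: the elements of $C(f^{\infty})$ are \emph{not} all defined over $K(z)$ --- each lies in a finite, $G_{K}$-stable locus $\{h\;|\;hf^{m}=f^{m}h\}$ and is therefore defined over \emph{some} finite extension of $K$, but not a fixed one. The $G_{K(z)}$-invariance of $\mathcal{O}$ nonetheless holds, via $\sigma(g(z))=g^{\sigma}(\sigma(z))=g^{\sigma}(z)\in\mathcal{O}$ for $\sigma\in G_{K(z)}$, which is exactly how the paper argues in the proof of Proposition \ref{prop4}. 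Finally, the ``technical delicacy'' you flag is handled in the paper by bounding $\log\|f'\|$ above by the continuous function $\max(\log\|f'\|,t)$ and letting $t\to-\infty$ by monotone convergence; this works verbatim in your setup, since you only need an upper bound on the limiting integral, and is cleaner than excising cycles near the critical set.
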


\begin{proof}
    Let $z\in \Per(f)$. Denote by $P$ its cycle, $p$ its period and $\lambda$ its multiplier. Suppose that $\mathcal{O}:=C(f^{\infty})\cdot z$ is infinite, and let us show that $\chi_{f}(P)\leq L_{f}$. We may clearly assume that $\lambda\neq0$.
    
    \underline{Step 1}. Every element of $\mathcal{O}$ is of the form $g(z)$, where $g\in C(f^{\infty})$. By definition, there exists $n\geq1$ such that $f^{n}g=gf^{n}$, and thus
    \[f^{np}(g(z))=g(f^{np}(z))=g(z)\]
    so $g(z)$ is a periodic point for $f$. Denote by  $P_{g}$ its cycle, $p_{g}|np$ its period and $\lambda_{g}$ its multiplier. The relation $f^{np}g=gf^{np}$ ensures that $z$ and $g(z)$ have the same multiplier \textit{as fixed points of} $f^{np}$. Indeed, we have
    \[(f^{np})'(g(z))\cdot g'(z)=g'(f^{np}(z))\cdot (f^{np})'(z)=g'(z)\cdot (f^{np})'(z)\]
    so $(f^{np})'(z)=(f^{np})'(g(z))$. We use here the fact that $z$ is not a critical point of $g$. This is clear if $g$ is linear, and if $g$ is non-linear, then Proposition \ref{prop1}, $(i)\Rightarrow(ii)$ ensures that $g$ and $f$ have an iterate in common. If $z$ were a critical point of $g$, then it would be a critical point of some iterate of $f^{np}$ yielding $\lambda=0$, a possibility we have excluded. We have thus obtained the equality
    \begin{equation}
        \label{eq2}
        \lambda_{g}^{np/p_{g}}=\lambda^{n}.
    \end{equation}
    In particular,
    \[\chi_{f}(P_{g})=\frac{1}{np}\log|\lambda_{g}^{np/p_{g}}|=\frac{1}{np}\log|\lambda^{n}|=\chi_{f}(P).\]
    
    \underline{Step 2}. Let $K$ be a number field over which $f$ and $\lambda$ are defined, and let $G_{K}$ denote the Galois group $\mathrm{Gal}(\bar{K}/K)$. Since $f$ is defined over $K$, for any $\sigma\in G_{K}$, $\sigma(g(z))$ is periodic for $f$ of period $p_{g}$. Moreover, the multiplier of $\sigma(g(z))$ as a fixed point of $f^{np}$ is $\sigma(\lambda^{n})=\lambda^{n}$, by \eqref{eq2}. In particular, $\chi_{f}(\sigma(P_{g}))=\chi_{f}(P)$. Write
    \[G_{K}\cdot\mathcal{O}=\bigcup_{k\geq0}E_{k}\]
    where each $E_{k}$ is a finite $G_{K}$-invariant set of the form $\{\sigma_{1}(P_{g}),\cdots,\sigma_{r}(P_{g})\}$, where $g\in C(f^{\infty})$ and $\sigma_{i}\in G_{K}$. Denote by $\mu_{k}$ the uniform measure on $E_{k}$. We have
    \begin{equation}
        \label{eq3}
        \int_{\Ps^{1}(\C)}\log\|f'\|\;d\mu_{k}=\frac{1}{r}\sum_{i=1}^{r}\chi_{f}(\sigma_{i}(P_{g}))=\chi_{f}(P).
    \end{equation}
    By Theorem \ref{theo4}, the sequence $(\mu_{k})_{k\geq0}$ converges weakly to $\mu_{f}$. For $t\in\R$, the function $\max(\log\|f'\|,t)$ is continuous, and so in view of \eqref{eq3} we obtain
    \[\chi_{f}(P)\leq\int_{\Ps^{1}(\C)}\max(\log\|f'\|,t)\;d\mu_{k}\underset{k\rightarrow\infty}{\longrightarrow}\int_{\Ps^{1}(\C)}\max(\log\|f'\|,t)\;d\mu_{f}.\]
    If we let $t\rightarrow-\infty$, the monotone convergence theorem ensures that $\chi_{f}(P)\leq L_{f}$ and assertion $(i)$ is proven.

    Suppose there are only finitely many finite $C(f^{\infty})$-orbits in $\Per(f)$. Then by $(i)$ we would have $\chi_{f}(P)\leq L_{f}$ for all but finitely many cycles of $f$, and so $f$ would be exceptional as a consequence of Theorem \ref{theo3}, contradicting our assumption. This proves assertion $(ii)$.
\end{proof}

\subsection{The exceptional locus} See \cite{Sil98} for details on the following definitions. The space of endomorphisms of $\Ps^{1}$ of degree $d$ can be endowed with the structure of an affine variety defined over $\Q$, which we denote by $\mathrm{Rat}_{d}$. Similarly, the conjugacy classes of endomorphisms of degree $d$ are parametrized by an affine variety $\mathrm{M}_{d}$ defined over $\Q$, which is equipped with a quotient map $\pi:\mathrm{Rat}_{d}\rightarrow\mathrm{M}_{d}$.

\begin{lemma}
    \label{lem3}
    The exceptional maps in $\mathrm{Rat}_{d}$ constitute a closed subvariety defined over $\Q$.
\end{lemma}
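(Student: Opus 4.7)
The plan is to decompose the exceptional locus in $\mathrm{Rat}_d$ as the union of three pieces --- the maps conjugate to $z\mapsto z^{\pm d}$, those conjugate to $\pm T_d$, and the Lattès maps --- and to show separately that each piece is a closed subvariety defined over $\Q$; the lemma then follows by taking a finite union.

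For the power and Chebyshev pieces, I use that each is the union of finitely many $\mathrm{PGL}_2$-conjugacy classes of explicit $\Q$-rational representatives, namely $z^{\pm d}$ in the power case and $\pm T_d$ in the Chebyshev case. By Silverman's construction \cite{Sil98}, $\mathrm{Rat}_d$ coincides with the stable locus for the $\mathrm{PGL}_2$-action and $\mathrm{M}_d$ is a geometric quotient, so every $\mathrm{PGL}_2$-orbit is closed in $\mathrm{Rat}_d$. Being the $\mathrm{PGL}_2$-orbit of a $\Q$-rational point, each such orbit is defined over $\Q$, and so is the finite union.

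For the Lattès piece, I use the parametrization from Subsection \ref{ss23}: a Lattès map of degree $d$ is specified by a type $(\Gamma,\alpha)$ with $|\Gamma|\in\{2,3,4,6\}$ and $\alpha\in R$ of norm $d$ (finitely many cases), an elliptic curve $E$ carrying this structure (varying along the $j$-line $\mathbb{A}^1$), a torsion translation $t\in E[1-\omega]$ (a finite set), and an isomorphism $E/\Gamma\cong\Ps^1$ (a $\mathrm{PGL}_2$-choice). Rigidifying the torsion and isomorphism data by an appropriate level structure yields a $\Q$-defined morphism from a finite disjoint union of varieties of the form $Y_1(N)\times\mathrm{PGL}_2$ into $\mathrm{Rat}_d$, whose image is the Lattès locus. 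To verify closedness, I analyze the cuspidal limit $j(E)\to\infty$ in diagram \eqref{eq1}: the elliptic curve degenerates to a nodal cubic with smooth part $\mathbb{G}_m$, on which $[\alpha]$ becomes a power map of strictly smaller degree. Consequently the family of Lattès maps either escapes $\mathrm{Rat}_d$ at the cusp (the coefficients blow up and the limit has degree $<d$) or limits to a Chebyshev or power map already contained in the first two pieces, so no non-exceptional limit is introduced.

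The main obstacle is this cuspidal analysis in the Lattès step; everything else is bookkeeping. The $\Q$-rationality is automatic throughout, since $\mathrm{PGL}_2$, the modular curves $Y_1(N)$, the universal elliptic curve with its endomorphisms, and the representatives $z^{\pm d}$ and $\pm T_d$ are all defined over $\Q$; and finite unions of closed $\Q$-subvarieties remain closed $\Q$-subvarieties, yielding the claim.
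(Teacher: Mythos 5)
Your decomposition (power, Chebyshev, Lattès) and the use of the geometric-quotient structure of $\pi:\mathrm{Rat}_{d}\rightarrow\mathrm{M}_{d}$ match the paper, which works downstairs in $\mathrm{M}_{d}$ and then pulls back. The difference is that for the only genuinely positive-dimensional piece --- the flexible Lattès families ($\Gamma$ of order $2$, $\alpha\in\Z$) --- the paper simply cites Milnor (\cite[\S 5]{Mil06}) for the fact that these classes form closed curves in $\mathrm{M}_{d}$ defined over $\Q$, whereas you attempt to prove closedness directly by a cuspidal degeneration. That step, which you correctly identify as the main obstacle, has a genuine gap. The Lattès locus in $\mathrm{Rat}_{d}$ is the full $\mathrm{PGL}_{2}$-saturation of your parametrized family, so to show it is closed you must show that \emph{no} sequence $g_{k}f_{y_{k}}g_{k}^{-1}$ converges in $\mathrm{Rat}_{d}$ as $y_{k}$ tends to the cusp --- equivalently, that the conjugacy classes $[f_{y_{k}}]$ diverge in $\mathrm{M}_{d}$. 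Computing the limit of one chosen family of representatives and observing that its coefficients acquire a common factor only rules out convergence of that particular family; it does not rule out convergence after reconjugation, i.e.\ the combined degeneration in which $y_{k}$ goes to the cusp while $g_{k}$ escapes to infinity in $\mathrm{PGL}_{2}$ in a correlated way. This is exactly the subtlety in studying the boundary of $\mathrm{M}_{d}$, and it is the content of the fact the paper imports from Milnor. Relatedly, the second branch of your dichotomy is off: the cuspidal limit of $[\alpha]$ on the nodal degeneration is a power or Chebyshev map of degree $|\alpha|=\sqrt{d}$, not $d$, so it is never a point of $\mathrm{Rat}_{d}$ and in particular is not ``already contained in the first two pieces'' of the degree-$d$ exceptional locus; only the ``escapes $\mathrm{Rat}_{d}$'' alternative is available, and it is precisely the one that needs the reconjugation analysis.

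A secondary point: your claim that ``$\Q$-rationality is automatic throughout'' is too quick for the rigid Lattès classes ($\Gamma$ of order $3,4,6$, or $\alpha$ a non-rational quadratic integer). These involve CM curves and the curves with $j=0$ or $j=1728$, and the individual conjugacy classes are a priori only $\bar{\Q}$-points of $\mathrm{M}_{d}$. One needs the observation (as in the paper) that there are finitely many such classes and that they are permuted by $\mathrm{Gal}(\bar{\Q}/\Q)$, so that their union descends to $\Q$; alternatively, a careful $\Q$-model of your parametrizing varieties would achieve the same, but it is not automatic as stated. The power and Chebyshev pieces of your argument are fine: $\mathrm{Rat}_{d}$ lies in the stable locus, $\mathrm{M}_{d}$ is a geometric quotient, hence orbits are closed, and the orbits of the $\Q$-rational representatives $z^{\pm d}$ and $\pm T_{d}$ are defined over $\Q$.
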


\begin{proof}
    We begin by describing the exceptional maps modulo conjugation as a subset $[\mathcal{E}]$ of $\mathrm{M}_{d}$. First of all, we have two classes of power maps, $[z\mapsto z^{d}]$ and $[z\mapsto z^{-d}]$, and one or two classes of Chebyshev polynomials $[T_{d}]$, $[-T_{d}]$, depending on whether $d$ is even or odd. We then have the classes of \textit{flexible Lattès maps}, that is, the maps $\ell_{\alpha,t}\in L(E,\Gamma)$ where $\Gamma$ is of order $2$ and $\alpha$ is an integer. These classes make up zero, one or two curves in $\mathrm{M}_{d}$, depending on whether $d$ is a non-square, an even square, or an odd square, and they are all defined over $\Q$ (see \cite[§5]{Mil06}). Finally, the remaining Lattès maps are contained in finitely many classes, as a result of \cite[Lemma 5.4]{Mil06}. These classes are all defined over an algebraic closure $\bar{\Q}$ of $\Q$, and they are permuted by the action of $\mathrm{Gal}(\bar{\Q}/\Q)$.

    From this description it is clear that $[\mathcal{E}]$ is a closed (reducible) subvariety of $\mathrm{M}_{d}$ defined over $\Q$, and therefore the same holds for $\pi^{-1}([\mathcal{E}])\subset\mathrm{Rat}_{d}$.
\end{proof}

We call $\mathcal{E}:=\pi^{-1}([\mathcal{E}])$ the \textit{exceptional locus} within $\mathrm{Rat}_{d}$.

\subsection{Specialization} We now generalize Proposition \ref{prop3} $(ii)$ to endomorphisms defined over arbitrary fields of characteristic $0$, using a specialization argument adapted from \cite[§4]{DFR23}, \cite[§5]{DF17}.

The argument goes roughly as follows: an endomorphism $f$ of $\Ps^{1}$ is defined over a finitely generated field, and hence it is defined over a function field $\bar{\Q}(B)$, where $B$ is a variety over an algebraic closure $\bar{\Q}$ of $\Q$. For any point $b\in B(\bar{\Q})$, we can ``evaluate'' the coefficients of $f$ at $b$ in order to obtain a new endomorphism $f_{b}$, this time defined over a number field. We then show that infinite orbits $C(f^{\infty})\cdot z$ remain infinite after this specialization process in order to apply the arithmetic equidistribution theorem in the proof of Proposition \ref{prop3} $(i)$. Then, the closedness of $\mathcal{E}$ will tell us that if every $f_{b}$ is exceptional then $f$ is exceptional, reaching the same contradiction as in the proof of Proposition \ref{prop3} $(ii)$.

One added difficulty is the fact that we apply the equidistribution result many times, each time choosing a larger number field extension as base field. We will therefore need to make sense of specializations that are \textit{above} a certain point $b\in B(\bar{\Q})$.\\

\paragraph{\texorpdfstring{\textbf{Family.}}{Family}} We return to the general case where $f$ is an endomorphism of $\Ps^{1}$ of degree $d$ defined over the complex numbers. Let $R$ be the ring generated by the coefficients of $f$ and the inverse of the resultant $\mathrm{Res}(P,Q)$ where $f:z\mapsto P(z)/Q(z)$, written in reduced form. Let $B:=\mathrm{Spec}(R\otimes\bar{\Q})$, an affine variety defined over an algebraic closure $\bar{\Q}$ of $\Q$. Then $f$ is defined over the function field $\bar{\Q}(B)$, and it extends to an endomorphism $f_{B}:\Ps^{1}_{B}\rightarrow\Ps^{1}_{B}$ of degree $d$ defined over $\bar{\Q}$. Here, $\Ps^{1}_{B}$ is the projective line over $B$. Denote by $s:\Ps^{1}_{B}\rightarrow B$ its structure morphism. For $b\in B$, we write $\Ps^{1}_{b}=s^{-1}(b)$ and define the \textit{specialization} $f_{b}:=(f_{B})_{|\Ps^{1}_{b}}$ of $f$ at $b$, an endomorphism of $\Ps^{1}_{b}$ defined over $\bar{\Q}(b)$, which is of degree $d$ since $\mathrm{Res}(P,Q)$ is invertible in $R\otimes\bar{\Q}$.

In particular, $f$ induces a family $(f_{b})_{b\in B}$ contained in $\mathrm{Rat}_{d}$ parametrized by $B$, whose generic point is $f$.\\

\paragraph{\texorpdfstring{\textbf{Periodic Points.}}{Periodic points}} For any $n\geq1$, we define $\Per_{n}(f)=\{z\in\Ps^{1}\;|\;f^{n}(z)=z\}$, a subvariety of $\Ps^{1}$ defined over $\bar{\Q}(B)$. Let $\Per_{n}(f)_{B}$ be its Zariski closure in $\Ps^{1}_{B}$, and for any $b\in B$, let $\Per_{n}(f)_{b}=\Per_{n}(f)_{B}\cap\Ps^{1}_{b}$. By construction, the elements of $\Per_{n}(f)_{b}$ satisfy $f_{b}^{n}(z)=z$. Let $\mathrm{mult}_{z}(\Per_{n}(f)_{b})$ be the intersection-theoretic multiplicity of a point $z$ in $\Per_{n}(f)_{b}$. The map
\[b\in B\mapsto \sum_{z\in\Per_{n}(f)_{b}}\mathrm{mult}_{z}(\Per_{n}(f)_{b})\in \N\]
is upper semi-continuous, as a consequence of the properness of $\Per_{n}(f)_{B}\rightarrow B$ and Nakayama's lemma (see \cite[Lemma 5.3]{DF17}). In particular, evaluating this map at the generic point we obtain
\[\sum_{z\in\Per_{n}(f)}\mathrm{mult}_{z}(\Per_{n}(f))\leq\sum_{z\in\Per_{n}(f)_{b}}\mathrm{mult}_{z}(\Per_{n}(f)_{b})\]
for any $b\in B$. The set $\Per_{n}(f)$ has $d^{n}+1$ points counted with multiplicity, and so $\Per_{n}(f)_{b}$ has at least that many points. But the same count holds for $\Per_{n}(f_{b})$ so we obtain the following: \textit{for any $b\in B$, $\Per_{n}(f)_{b}=\Per_{n}(f_{b})$}.\\

\paragraph{\texorpdfstring{\textbf{Base change.}}{Base change}} Let $B'$ be an affine variety over $\bar{\Q}$ equipped with a finite morphism $B'\rightarrow B$. This morphism induces a map $\pi_{B'}:\Ps^{1}_{B'}\rightarrow \Ps^{1}_{B}$ which restricts to an isomorphism $\pi_{b'}:\Ps^{1}_{b'}\rightarrow \Ps^{1}_{b}$ for any point $b'\in B'$ that is above $b\in B$. The endomorphism $f$ is also defined over $\bar{\Q}(B')$ and it extends to a map $f_{B'}:\Ps^{1}_{B'}\rightarrow \Ps^{1}_{B'}$ which satisfies $\pi_{B'}f_{B'}=f_{B}\pi_{B'}$.

Let $z\in \Per(f)$ and $b\in B$. The point $z$ is defined over a finite extension of $\bar{\Q}(B)$. Therefore, there exists some $B'$ equipped with a finite morphism $B'\rightarrow B$ such that $z$ is defined over $\bar{\Q}(B')$. For any $b'\in B'$ that is above $b$, we can define the specialization $z_{b'}$ of $z$ at $b'$, a periodic point for $f_{b'}$. We deduce that $\pi_{b'}(z_{b'})\in \Per(f_{b})$. Since $\Per_{n}(f)_{b}=\Per_{n}(f_{b})$ for all $n\geq1$, we obtain the following: \textit{for any $b\in B$, the set $\{\pi_{b'}(z_{b'});\;z\in\Per(f),\; b'\textrm{ above }b\}$ is equal to $\Per(f_{b})$}.\\

Finally, for every $b\in B(\bar{\Q})$, we fix a complex embedding $\bar{\Q}(b)=\bar{\Q}\rightarrow\C$. This enables us to speak about the characteristic exponents and the Lyapunov exponent of $f_{b}$. Moreover, for any finite morphism $B'\rightarrow B$ and any $b'\in B'(\bar{\Q})$ above $b$, we may identify the $\bar{\Q}$-points of $\Ps^{1}_{b'}$ with those of $\Ps^{1}_{b}$, giving us an archimedean metric on $\Ps^{1}_{b'}(\bar{\Q})$ compatible with the one on $\Ps^{1}_{b}(\bar{\Q})$.

\begin{proposition}
    \label{prop4}
    Let $f$ be a non-exceptional endomorphism of degree $d\geq 2$, defined over $\bar{\Q}(B)$ as above.
    \begin{enumerate}
        \item Let $z\in\Per(f)$ be defined over a finite extension $\bar{\Q}(B')$ of $\bar{\Q}(B)$, and suppose that $\mathcal{O}:=C(f^{\infty})\cdot z$ is infinite. For any $b'\in B'(\bar{\Q})$ we have $\chi_{f_{b'}}(P')\leq L_{f_{b'}}$, where $P'$ is the $f_{b'}$-periodic cycle of the specialization of $z$ at $b'$.
        \item There exist infinitely many finite $C(f^{\infty})$-orbits in $\Per(f)$.
    \end{enumerate}
\end{proposition}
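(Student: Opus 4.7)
The plan is to transport the two-step proof of Proposition~\ref{prop3} to the specialization $f_{b'}$, and then to deduce (ii) from (i) by a Zariski density argument inside the exceptional locus of $\mathrm{Rat}_{d}$.

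For (i), fix $b'\in B'(\bar{\Q})$ and let $K\subset\bar{\Q}$ be a number field containing the coefficients of $f_{b'}$ and the multiplier $\lambda_{b'}:=(f_{b'}^{p})'(z_{b'})$. The first step transfers verbatim: for each $g\in C(f^{\infty})$ with $gf^{n}=f^{n}g$, choosing a finite cover $B_{g}\rightarrow B'$ over which $g$ is defined and a point $b_{g}\in B_{g}(\bar{\Q})$ above $b'$, the specialization $g_{b_{g}}$ commutes with $f_{b'}^{n}$ (under the identification $\Ps^{1}_{b_{g}}\cong\Ps^{1}_{b'}$), and specializing the multiplier relation~\eqref{eq2} at $b_{g}$ shows that $(g(z))_{b_{g}}\in\Per(f_{b'})$ and each of its $G_{K}$-conjugates lie in $f_{b'}$-cycles of characteristic exponent equal to $\chi_{f_{b'}}(P')$.

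The second, harder step is to bundle these specializations into a sequence of finite $G_{K}$-invariant subsets $E_{k}^{b'}\subset\Per(f_{b'})$ with infinite union, so as to invoke Theorem~\ref{theo4}. At the generic level, $\mathcal{O}$ being infinite splits into infinitely many finite $G_{\bar{\Q}(B')}$-orbits $E_{k}$, each arising as the generic fiber of an irreducible subvariety $\overline{E_{k}}\subset\Ps^{1}_{B'}$ finite over $B'$; two distinct such subvarieties meet above a proper Zariski-closed subset of $B'$. At the fixed point $b'$, however, these collision loci can pile up and obstruct any naive selection of disjoint specializations. Following the specialization technique of \cite[\S4]{DFR23} and \cite[\S5]{DF17}, I extract an infinite subsequence of pairwise disjoint $E_{k}^{b'}$'s by enlarging the base cover (and correspondingly the number field $K$) inductively---this is the ``added difficulty'' mentioned in the preamble, and is the main technical obstacle I expect. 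With an infinite union at hand, the integration and monotone convergence argument of Proposition~\ref{prop3}(i) applied to $f_{b'}$ yields $\chi_{f_{b'}}(P')\leq L_{f_{b'}}$.

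For (ii), I argue by contradiction. If $C(f^{\infty})$ had only finitely many finite orbits in $\Per(f)$, then all but finitely many $f$-cycles would come from $z\in\Per(f)$ with infinite $C(f^{\infty})$-orbit. By (i), the corresponding specialized $f_{b'}$-cycles would satisfy $\chi_{f_{b'}}(P')\leq L_{f_{b'}}$ for every $b'\in B'(\bar{\Q})$, and by the compatibility $\Per_{n}(f)_{b'}=\Per_{n}(f_{b'})$ from the preliminaries they would exhaust all but finitely many cycles of $f_{b'}$. Theorem~\ref{theo3} then forces $f_{b'}$ to be exceptional for every $b'\in B(\bar{\Q})$; since $B(\bar{\Q})$ is Zariski-dense in $B$ and $\mathcal{E}\subset\mathrm{Rat}_{d}$ is Zariski-closed by Lemma~\ref{lem3}, the morphism $B\rightarrow\mathrm{Rat}_{d}$ parametrizing the family $(f_{b})_{b\in B}$ factors through $\mathcal{E}$, whence $f$ itself is exceptional, contradicting the hypothesis.
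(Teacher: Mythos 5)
Your part (ii) and the overall architecture of part (i) match the paper, but there is a genuine gap at exactly the place you flag as the main obstacle, and the fix you sketch does not work. The issue is to show that the specialized orbit $\mathcal{O}_{b'}$ is infinite (note that Theorem~\ref{theo4} does not require the sets $E_{k}$ to be pairwise disjoint, only that their union be infinite, so the problem is really the infinitude of $\mathcal{O}_{b'}$, not disjointness). Your proposal --- inductively enlarging the base cover to separate the ``collision loci'' --- cannot succeed: the point $b'$ is fixed, each $\pi_{b''}:\Ps^{1}_{b''}\rightarrow\Ps^{1}_{b'}$ is an isomorphism, so collisions between specializations of distinct points of $\mathcal{O}$ take place in the fixed fiber $\Ps^{1}_{b'}$ and are unaffected by base change; since there are infinitely many collision loci, $b'$ may a priori lie on all of them, and nothing in your argument rules out $\mathcal{O}_{b'}$ being finite.

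The paper closes this gap by exploiting that $\mathcal{O}\subset\Per(f)$. For each $n$, the Zariski closure $\mathcal{P}^{(n)}$ of $\mathcal{O}\cap\Per_{n}(f)$ is proper over $B'$, so the total intersection multiplicity of its fiber can only go up under specialization:
\[\#\bigl(\mathcal{O}\cap\Per_{n}(f)\bigr)\leq\sum_{z\in\mathcal{P}_{b'}^{(n)}}\mathrm{mult}_{z}\bigl(\mathcal{P}_{b'}^{(n)}\bigr).\]
A collision at $b'$ thus produces a periodic point of $f_{b'}$ of multiplicity $m\geq2$, and by Leau--Fatou theory such a point attracts $m-1$ critical points of $f_{b'}$; since $f_{b'}$ has at most $2d-2$ critical points, the total excess multiplicity over all of $\mathcal{P}_{b'}^{(n)}$ is at most $2d-2$, giving the uniform bound $\#(\mathcal{O}\cap\Per_{n}(f))\leq\#\mathcal{O}_{b'}+2d-2$ for every $n$. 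Letting $n$ run through $k!$ forces $\mathcal{O}_{b'}$ to be infinite. This dynamical input is the missing idea in your write-up. (A minor further point: when specializing the multiplier relation one must also dispose of the case where $\lambda_{b'}$ is $0$ or a root of unity, since otherwise the period of $w_{b'}$ could drop; the paper excludes this case at the outset, as the conclusion is then vacuous.)
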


\begin{proof}[Proof of (i).]
    Let $z\in\Per(f)$ be defined over $\bar{\Q}(B')$ and set $\mathcal{O}:=C(f^{\infty})\cdot z$. Any $w\in \mathcal{O}$ is defined over a finite extension $\bar{\Q}(B'')$ of $\bar{\Q}(B')$. For any $b''\in B''$ that is above some $b'\in B'$, we may consider $\pi_{b''}(w_{b''})\in\Per(f_{b'})$. This allows us to define
    \[\mathcal{O}_{b'}:=\{\pi_{b''}(w_{b''});\;w\in\mathcal{O},\;b''\textrm{ above }b'\}\subset \Per(f_{b'})\]
    for any $b'\in B'$.
    
    \underline{Step 1}. We start by proving the following: \textit{for any $b'\in B'$ and any $n\geq1$,}
    \[\#(\mathcal{O}\cap\Per_{n}(f))\leq \#\mathcal{O}_{b'}+2d-2.\]
    
    Let $w\in\mathcal{O}$. It is of the form $w=g(z)$ for some $g\in C(f^{\infty})$. Now, $g$ belongs to the algebraic locus $\{h\in\mathrm{Rat}_{\deg(g)}\;|\; hf^{m}=f^{m}h\}$ for some integer $m\geq1$. This locus is finite as a consequence of \cite[Theorem 2]{Lev90}, \cite{Lev01}, so $g$ is defined over a finite extension of $\bar{\Q}(B')$. We can therefore make sense of the endomorphism $g^{\sigma}$, where $\sigma$ is an element of the absolute Galois group $G_{\bar{\Q}(B')}$. Given that $f$ is defined over $\bar{\Q}(B')$, we have $g^{\sigma}\in C(f^{\infty})$, and so $\sigma(w)=g^{\sigma}(z)\in\mathcal{O}$. From this discussion we deduce that $\mathcal{O}$ is $G_{\bar{\Q}(B')}$-invariant.
    
    For any $n\geq1$, let $\mathcal{P}^{(n)}$ be the Zariski closure of $\mathcal{O}\cap\Per_{n}(f)$ in $\Ps^{1}_{B'}$, and write $\mathcal{P}_{b'}^{(n)}:=\mathcal{P}^{(n)}\cap\Ps^{1}_{b'}$ for any $b'\in B'$. As before, $\mathcal{P}^{(n)}\rightarrow B'$ is proper and so
    \[b'\in B'\mapsto \sum_{z\in\mathcal{P}_{b'}^{(n)}}\mathrm{mult}_{z}(\mathcal{P}_{b'}^{(n)})\in \N\]
    is upper semi-continuous. Evaluating this map at the generic point $\eta'$ of $B'$ yields
    \begin{equation}
        \label{eq4}
        \sum_{z\in\mathcal{P}_{\eta'}^{(n)}}\mathrm{mult}_{z}(\mathcal{P}_{\eta'}^{(n)})\leq\sum_{z\in\mathcal{P}_{b'}^{(n)}}\mathrm{mult}_{z}(\mathcal{P}_{b'}^{(n)})
    \end{equation}
    for any $b'\in B'$. Note that, by the $G_{\bar{\Q}(B')}$-invariance of $\mathcal{O}\cap\Per_{n}(f)$, we have $\mathcal{P}_{\eta'}^{(n)}=\mathcal{O}\cap\Per_{n}(f)$ and similarly, $\mathcal{P}_{b'}^{(n)}=\mathcal{O}_{b'}\cap \Per_{n}(f_{b'})$.

    Fix $b'\in B'$ and $n\geq1$, and consider the multiplicities of points in $\mathcal{P}_{b'}^{(n)}$. By the Leau-Fatou theory of parabolic points, any $f_{b'}$-periodic point of multiplicity $m$ attracts $m-1$ critical points of $f_{b'}$ (see \cite[Theorem 10.15]{Mil06b}). There are at most $\#\mathcal{O}_{b'}$ underlying points in $\mathcal{P}_{b'}^{(n)}$, and every increment in multiplicity must be accounted for by an $f_{b'}$-critical point. Since $f_{b'}$ has degree $d$, it has at most $2d-2$ critical points, and thus
    \[\sum_{z\in\mathcal{P}_{b'}^{(n)}}\mathrm{mult}_{z}(\mathcal{P}_{b'}^{(n)})\leq \#\mathcal{O}_{b'}+2d-2.\]
    Applying Inequality \eqref{eq4} we obtain
    \[\#(\mathcal{O}\cap\Per_{n}(f))\leq\sum_{z\in\mathcal{P}_{\eta'}^{(n)}}\mathrm{mult}_{z}(\mathcal{P}_{\eta'}^{(n)})\leq\sum_{z\in\mathcal{P}_{b'}^{(n)}}\mathrm{mult}_{z}(\mathcal{P}_{b'}^{(n)})\leq \#\mathcal{O}_{b'}+2d-2.\]

    \underline{Step 2}. For the remainder of the proof, let us assume that $\mathcal{O}$ is infinite. If 
    for any $b'\in B'$, $\mathcal{O}_{b'}$ were finite, then by the previous step, we would have
    \[\lim_{k\rightarrow\infty}\#(\mathcal{O}\cap\Per_{k!}(f))\leq \#\mathcal{O}_{b'}+2d-2<+\infty\]
    which is absurd.
    
    We have shown that $\mathcal{O}_{b'}$ is infinite for any $b'$. Now, let $b'\in B'(\bar{\Q})$. Denote by $p$ the period of $z$, and by $\lambda$ its multiplier. Let $z_{b'}$ be the specialization of $z$ at $b'$, $P'$ its corresponding cycle and $\lambda_{b'}$ its multiplier. In order to prove assertion $(i)$ we may clearly assume that $\lambda_{b'}$ is not $0$ nor a root of unity. Consider $w\in \mathcal{O}$ with period $p_{w}$ and multiplier $\lambda_{w}$, defined over $\bar{\Q}(B'')$. From the proof of Proposition \ref{prop3} $(i)$ we know that there exists some $n$ such that 
    \[\lambda_{w}^{np/p_{w}}=\lambda^{n}.\]
    This equality remains true after specialization at $b'$: for any $w_{b'}\in\Per(f_{b'})$ of the form $\pi_{b''}(w_{b''})$, with cycle $P_{w}$ and multiplier $\lambda_{w,b'}$, we have
    \[\lambda_{w,b'}^{np/p_{w}}=\lambda_{b'}^{n}\]
    and thus
    \[\chi_{f_{b'}}(P_{w})=\chi_{f_{b'}}(P').\]
    Here we use the fact that the period of $w_{b'}$ is $p_{w}$, the same as the period of its generic counterpart $w$. Indeed, if the period were to drop, $w_{b'}$ would be a parabolic point and thus $\lambda_{b'}$ would be a root of unity, a possibility we have excluded.
    
    \underline{Conclusion}. We have the equality $\chi_{f_{b'}}(P_{w})=\chi_{f_{b'}}(P')$ for infinitely many $f_{b'}$-periodic cycles $P_{w}$. We may now apply the equidistribution argument in the proof of Proposition \ref{prop3} $(i)$, working over a number field over which $f_{b'}$ and $\lambda_{b'}$ are defined, in order to obtain $\chi_{f_{b'}}(P')\leq L_{f_{b'}}$. This establishes assertion $(i)$. 
\end{proof}

\begin{proof}[Proof of (ii).]
    Let $b\in B(\bar{\Q})$, and consider a point $z_{b}\in\Per(f_{b})$. There exists some $z\in \Per(f)$ defined over an extension $\bar{\Q}(B')$ of $\bar{\Q}(B)$, and a point $b'$ above $b$, such that $\pi_{b'}(z_{b'})=z_{b}$. If $C(f^{\infty})\cdot z$ is infinite, then by assertion $(i)$, $\chi_{f_{b'}}(P')\leq L_{f_{b'}}$, where $P'$ is the $f_{b'}$-periodic cycle containing $z_{b'}$, and thus $\chi_{f_{b}}(P)\leq L_{f_{b}}$, where $P$ is the $f_{b}$-periodic cycle containing $z_{b}$.
    
    If all but finitely many $z\in \Per(f)$ had an infinite $C(f^{\infty})$-orbit, then we would have $\chi_{f_{b}}(P)\leq L_{f_{b}}$ for all but finitely many periodic cycles of $f_{b}$, and hence $f_{b}$ would be contained in the exceptional locus $\mathcal{E}$ as a consequence of Theorem \ref{theo3}. This would hold true for any $b\in B(\bar{\Q})$: by Lemma \ref{lem3}, $\mathcal{E}$ is Zariski closed and so the generic point of the family $(f_{b})_{b\in B}$, which is $f$, would also be contained in $\mathcal{E}$. This contradicts our initial assumption that $f$ is non-exceptional, concluding the proof of $(ii)$.
\end{proof}

\begin{remark}
    \label{rem1}
    Proposition \ref{prop2} becomes trivial if we replace $C(f^{\infty})$ with $C(f)$. Indeed, if $z$ is a periodic point for $f$ of period $p$, then the orbit $C(f)\cdot z$ consists of periodic points for $f$ of period dividing $p$. Thus, $C(f)\cdot z$ is finite, of size at most $\deg(f)^{p}+1$.
\end{remark}
\newpage

\section{A criterion for equality for endomorphisms}

The purpose of this section is to prove the following strengthening of Lemma \ref{lem1}: if some non-linear, non-exceptional endomorphisms $f,g$ have the same degree, the same measure of maximal entropy, and three common fixed points in their common Julia set, then $f=g$ (see Proposition \ref{prop5} below). Our proof is based on a result of Ritt \cite{Rit23}, for which we also provide a new proof.

\subsection{Ritt sequence} Let $f,g\in\End(\Ps^{1})^{+}$ be commuting endomorphisms. The map
\[(f,g):\Ps^{1}\rightarrow\Ps^{1}\!\times\Ps^{1}:\;z\mapsto(f(z),g(z))\]
has as an image a rational, possibly singular curve $\Gamma$. Denote by $(a,b):\Ps^{1}\rightarrow \Gamma$ its normalization map. Then $(f,g)$ factors through $(a,b)$: there exists $u:\Ps^{1}\rightarrow\Ps^{1}$ such that $f=au$ and $g=bu$. Since $fg=gf$, we have $aubu=buau$ and by right cancellation, $aub=bua$.

Define $f_{1}=ua$ and $g_{1}=ub$. Note that $\deg(f_{1})=\deg(f)$ and $\deg(g_{1})=\deg(g)$. Since $aub=bua$, composing with $u$ on the left yields $f_{1}g_{1}=g_{1}f_{1}$. More generally, define by induction
\[(f_{0},g_{0})=(f,g),\quad (f_{n+1},g_{n+1})=(u_{n}a_{n},u_{n}b_{n})\quad\mathrm{where}\quad(f_{n},g_{n})=(a_{n}u_{n},b_{n}u_{n})\]
and $z\mapsto(a_{n}(z),b_{n}(z))$ is a generically injective parametrization of the curve $\Gamma_{n}:=\{(f_{n}(z),g_{n}(z));z\in\Ps^{1}\}$. We call $(a_{n},b_{n},u_{n})_{n\geq0}$ the \textit{Ritt sequence} attached to $(f,g)$. It is uniquely defined, up to the changes $(a_{n},b_{n},u_{n})\mapsto (a_{n}\sigma,b_{n}\sigma,\sigma^{-1}u_{n})$ for any linear $\sigma$. As before, we have
\begin{equation}
    \label{eq5}
    \forall n\geq0,\quad a_{n}u_{n}b_{n}=b_{n}u_{n}a_{n}
\end{equation}
so each $(f_{n},g_{n})$ is a commuting pair with $\deg(f_{n})=\deg(f)$ and $\deg(g_{n})=\deg(g)$. Since $a_{n}u_{n}b_{n}=a_{n}b_{n+1}u_{n+1}$ and $b_{n}u_{n}a_{n}=b_{n}a_{n+1}u_{n+1}$, we also obtain the relation
\begin{equation}
    \label{eq6}
    \forall n\geq0,\quad a_{n}b_{n+1}=b_{n}a_{n+1}.
\end{equation}
Equation \eqref{eq6} tells us that $\Gamma_{n+1}:=\{(a_{n+1}(z),b_{n+1}(z));z\in\Ps^{1}\}$ is contained in the set $\Delta_{n}:=\{(x,y)\in\Ps^{1}\!\times\Ps^{1}\;|\;b_{n}(x)=a_{n}(y)\}$. Recall that the \textit{bidegree} of a (reducible) curve $C\subset\Ps^{1}\times\Ps^{1}$ is the pair $(d_{1},d_{2})$, where $d_{1}$ is the intersection number of $C$ with a vertical fiber $\{*\}\times\Ps^{1}$, and similarly $d_{2}$ is the intersection number of $C$ with a horizontal fiber $\Ps^{1}\times\{*\}$. For any $n\geq0$, the bidegrees of $\Gamma_{n}$ and $\Delta_{n}$ are both equal to $(\deg(a_{n}),\deg(b_{n}))$. The inclusion $\Gamma_{n+1}\subset\Delta_{n}$ therefore implies that
\[\deg(a_{n+1})\leq\deg(a_{n}),\quad\deg(b_{n+1})\leq\deg(b_{n})\]
In particular, the sequences $(\deg(a_{n}))_{n\geq0}$ and $(\deg(b_{n}))_{n\geq0}$ are eventually constant.\\

Ritt introduced this sequence in order to put commuting pairs into a more manageable form. For simplicity, we will state his result in the equal-degree case, $\deg(f)=\deg(g)$. This implies that $\deg(a_{n}u_{n})=\deg(b_{n}u_{n})$ for any $n\geq0$, and hence $\deg(a_{n})=\deg(b_{n})$ for any $n\geq0$.

\begin{theorem}[{\cite[§X]{Rit23}}]
    \label{theo5}
    Let $f,g\in\End(\Ps^{1})^{+}$ be a pair of commuting endomorphisms of equal degree. Denote by $(a_{n},b_{n},u_{n})_{n\geq0}$ its Ritt sequence, and let $r_{n}$ be the common degree of $a_{n}$ and $b_{n}$. If $f,g$ are non-exceptional, then $r_{n}=1$ for large enough $n$.
\end{theorem}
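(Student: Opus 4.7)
\emph{Proof plan.} The setup already gives that $(\deg a_n)_{n \geq 0}$ and $(\deg b_n)_{n \geq 0}$ are non-increasing, and the equal-degree assumption $\deg f = \deg g$ forces $\deg a_n = \deg b_n = r_n$ for all $n$. In particular, $(r_n)$ stabilizes to some value $r \geq 1$; I will argue by contradiction and assume $r \geq 2$, picking $N$ so that $r_n = r$ for all $n \geq N$.

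The first step is to turn stabilization into a geometric statement. For $n \geq N$, the curve $\Delta_n = \{(x,y) \in \Ps^1 \times \Ps^1 : b_n(x) = a_n(y)\}$ has bidegree $(r,r)$ and contains the irreducible curve $\Gamma_{n+1}$ of the same bidegree $(\deg a_{n+1}, \deg b_{n+1}) = (r,r)$. Comparing them as effective divisors on $\Ps^1 \times \Ps^1$ forces $\Gamma_{n+1} = \Delta_n$, so $\Delta_n$ is an irreducible curve of geometric genus zero, normalized by $(a_{n+1}, b_{n+1}) : \Ps^1 \to \Delta_n$.

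The heart of the argument is to show that the rationality of $\Delta_n$, with $\deg a_n = \deg b_n = r \geq 2$, forces $(a_n, b_n)$ into an exceptional configuration. This is the classical Ritt-type statement on rational parametrizations of $\{a(y) = b(x)\}$: via a Riemann--Hurwitz computation on $\Delta_n \to \Ps^1$ through either projection, one translates the genus-zero condition into a near-equality in a ramification bound, which constrains $a_n$ and $b_n$ to come from a simultaneous power, Chebyshev, or Lattès model. One then uses the chain of semi-conjugacies $u_n f_n = f_{n+1} u_n$ (and its $g$-analogue) to dominantly semi-conjugate $(f_n, g_n)$ back down to $(f_0, g_0) = (f,g)$; since each of the three exceptional types is stable under dominant semi-conjugacy (which one can track through the preperiodic set and Lemma \ref{lem2}), this would make $f$ and $g$ themselves exceptional, the desired contradiction.

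The main obstacle is precisely the middle step -- extracting an explicit exceptional structure on $(a_n, b_n)$ from the mere rationality of $\Delta_n$, without invoking the full Ritt--Fried classification of rational components of $a(y) - b(x)$. A cleaner route that I would pursue is to exploit, within the stabilized regime, the two compatibility relations $a_n b_{n+1} = b_n a_{n+1}$ (equation \eqref{eq6}) and $u_n a_n = a_{n+1} u_{n+1}$ in order to produce either a new commuting pair of strictly smaller degree (contradicting stabilization of $(r_n)$) or a common fully invariant finite set on $\Ps^1$ for all the $a_n$ and $b_n$. Either reduction would yield exceptionality of $(f,g)$ directly through Lemma \ref{lem2}, bypassing the classification step and giving a streamlined new proof of Ritt's result.
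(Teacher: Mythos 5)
Your setup is correct as far as it goes: the degree sequences stabilize to some common value $r$, and once they stabilize the comparison of bidegrees forces $\Gamma_{n+1}=\Delta_n$, so $\Delta_n$ is an irreducible rational curve normalized by $(a_{n+1},b_{n+1})$. This matches the opening move of the paper's Lemma~\ref{lem5}. But the actual proof stops there. You candidly flag the remaining step — turning ``$\Delta_n$ is rational with $r\geq 2$'' into exceptionality of $(f,g)$ — as ``the main obstacle,'' and then offer two unexecuted sketches. The first route is exactly the Ritt--Fried classification of rational fibre products, which is Ritt's original case analysis; the paper's entire point in this section is to \emph{avoid} re-deriving that classification. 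The second route is not carried out at all: producing ``a new commuting pair of strictly smaller degree'' would not obviously contradict stabilization of $(r_n)$ (the degrees of $f_n,g_n$ are constant, and the $r_n$ are degrees of the $a_n,b_n$, which are what you'd be trying to control), and a ``common fully invariant finite set for all $a_n, b_n$'' only characterizes the power-map case, not Chebyshev or Lattès. So the key deduction is missing.

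The paper closes the gap by a quite different mechanism that you did not consider. It takes Theorem~\ref{theo6} (Ritt's theorem $f^p=g^p$, reproved independently by Eremenko) as input, and studies the correspondence $c_n=b_na_n^{-1}$. Lemma~\ref{lem4} uses the relation $f_n^p=g_n^p$ to show each $c_n$ is \emph{finitary} with generic orbit size $s_{c_n}\leq p\,d^p$, a bound uniform in $n$. Lemma~\ref{lem5} then uses precisely your observation $\Gamma_{n+1}=\Delta_n$ (in the stabilized regime) to show $s_{c_{n+1}}\geq r\,s_{c_n}$: a lift along $a_n$ of an orbit of $c_n$ embeds into an orbit of $c_{n+1}$. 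Iterating gives $s_{c_{n_0+k}}\geq r^k s_{c_{n_0}}$, which contradicts the uniform bound unless $r=1$. So what you were missing is the dynamical/combinatorial invariant (generic orbit size of a finitary correspondence) that is both bounded above by Ritt's equality $f^p=g^p$ and forced to grow geometrically when $r\geq 2$; without some such quantity, $\Gamma_{n+1}=\Delta_n$ alone does not yield the conclusion.
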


Ritt used this result in order to prove the theorem stated in the introduction, which we restate in the equal-degree case:

\begin{theorem}
    \label{theo6}
    Let $f,g\in\End(\Ps^{1})^{+}$ be a pair of commuting endomorphisms of equal degree. If $f,g$ are non-linear and non-exceptional, there exists an integer $p\geq1$ such that $f^{p}=g^{p}$.
\end{theorem}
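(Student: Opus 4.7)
The plan is to invoke Theorem~\ref{theo5} at a large level of the Ritt sequence where $a_{N},b_{N}$ become linear, in order to produce an equality of iterates $f_{N}^{p}=g_{N}^{p}$, and then to transport this equality down to level $0$ using a pair of semi-conjugacies derived from \eqref{eq5}.

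By Theorem~\ref{theo5} there exists $N\geq0$ with $\deg(a_{N})=\deg(b_{N})=1$. The linear map $\sigma:=b_{N}a_{N}^{-1}$ then satisfies $g_{N}=b_{N}u_{N}=\sigma f_{N}$, and the commutation $f_{N}g_{N}=g_{N}f_{N}$ rewrites as $f_{N}\sigma f_{N}=\sigma f_{N}^{2}$, so right cancellation yields $\sigma f_{N}=f_{N}\sigma$. Since $f_{N}$ is non-linear its set of periodic points is infinite; $\sigma$ permutes each finite set $\Per_{k}(f_{N})$, so choosing $k$ so that $\Per_{k}(f_{N})$ contains at least three distinct points, some power $\sigma^{p}$ fixes three points of $\Ps^{1}$, and therefore equals the identity in $\mathrm{PGL}_{2}$. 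It follows that $g_{N}^{p}=\sigma^{p}f_{N}^{p}=f_{N}^{p}$.

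The core of the descent is the observation that for every $n\geq0$, the map $a_{n}$ is simultaneously a semi-conjugacy from $f_{n+1}$ to $f_{n}$ and from $g_{n+1}$ to $g_{n}$. The first relation, $a_{n}f_{n+1}=a_{n}u_{n}a_{n}=f_{n}a_{n}$, is immediate, while the second follows from \eqref{eq5}:
\[a_{n}g_{n+1}=a_{n}u_{n}b_{n}\stackrel{\eqref{eq5}}{=}b_{n}u_{n}a_{n}=g_{n}a_{n}.\]
An easy induction on $k$ then gives $a_{n}f_{n+1}^{k}=f_{n}^{k}a_{n}$ and $a_{n}g_{n+1}^{k}=g_{n}^{k}a_{n}$ for every $k\geq1$. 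Assuming $f_{n+1}^{p}=g_{n+1}^{p}$, composing with $a_{n}$ on the left gives $f_{n}^{p}a_{n}=g_{n}^{p}a_{n}$, and right cancellation (valid since $a_{n}$ is dominant) yields $f_{n}^{p}=g_{n}^{p}$. A descending induction from $n=N$ to $n=0$ concludes $f^{p}=f_{0}^{p}=g_{0}^{p}=g^{p}$.

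The main conceptual step is spotting the second semi-conjugacy $a_{n}g_{n+1}=g_{n}a_{n}$, which upgrades equation \eqref{eq5} from a mere commutation identity into an effective lifting tool for the Ritt sequence; once this is in hand, the finite-order argument for $\sigma$ and the descending induction are both routine.
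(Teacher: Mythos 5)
Your proof is correct and follows essentially the same route as the paper: invoke Theorem~\ref{theo5} to find a level $N$ where $a_N,b_N$ are linear, show $\sigma=b_Na_N^{-1}$ commutes with $f_N$ and has finite order $p$, and then descend $f_N^p=g_N^p$ down the Ritt sequence via Equation~\eqref{eq5}. Your packaging of the descent as two semi-conjugacies $a_nf_{n+1}=f_na_n$ and $a_ng_{n+1}=g_na_n$ is a slightly cleaner way to state what the paper does by a direct chain of manipulations, but the underlying argument is identical.
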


Let us briefly mention how Theorem \ref{theo5} implies \ref{theo6}. Let $f,g$ be non-linear, non-exceptional, commuting endomorphisms of equal degree. Denote by $(a_{n},b_{n},u_{n})_{n\geq0}$ their Ritt sequence. By Theorem \ref{theo5} there exists some $n\geq0$ such that $\deg(a_{n})=\deg(b_{n})=1$. We can define $\sigma=a_{n}b_{n}^{-1}$, so that $f_{n}=\sigma g_{n}$. Now, $\sigma g_{n}g_{n}=g_{n}\sigma g_{n}$ so by right cancellation $\sigma g_{n}=g_{n}\sigma$. Since $\deg(g_{n})=\deg(g)\geq2$, $\sigma$ must be of finite order, say $p$, which yields
\[f_{n}^{p}=(\sigma g_{n})^{p}=\sigma^{p}g_{n}^{p}=g_{n}^{p}.\]
Since $f_{n}=u_{n-1}a_{n-1}$ and $g_{n}=u_{n-1}b_{n-1}$, we deduce that
\[f_{n-1}^{p}a_{n-1}=a_{n-1}(u_{n-1}a_{n-1})^{p}=a_{n-1}(u_{n-1}b_{n-1})^{p}=(b_{n-1}u_{n-1})^{p}a_{n-1}=g_{n-1}^{p}a_{n-1}\]
where the next-to-last equality comes from Equation \eqref{eq5}. We obtain $f_{n-1}^{p}=g_{n-1}^{p}$, and by a backwards induction, $f^{p}=g^{p}$.\\

Theorem \ref{theo5} is proved by Ritt by means of a case-by-case analysis which doesn't lend itself to generalization. On the other hand, Theorem \ref{theo6} has been established by different means by Eremenko \cite{Ere90}, following the work of Fatou and Julia \cite{Fat23}, \cite{Jul22}. This new approach has also yielded results for commuting pairs of endomorphisms in higher dimensions \cite{DS02}, \cite{Kau18}. We therefore find it relevant to provide a proof that Theorem \ref{theo6} implies Theorem \ref{theo5}. We will do so using the language of \textit{finitary correspondences}.

\subsection{Correspondences} In our setting, a \textit{correspondence} $c$ is given by a pair of dominant maps $a,b:\Ps^{1}\rightarrow\Ps^{1}$:
\[\begin{tikzcd}
	& {\Ps^{1}} \\
	{\Ps^{1}} && {\Ps^{1}}
	\arrow["a"', from=1-2, to=2-1]
	\arrow["b", from=1-2, to=2-3]
	\arrow["c", dashed, from=2-1, to=2-3]
\end{tikzcd}\]
We will also denote this correspondence by $c=ba^{-1}$. We can interpret $c$ as a multi-valued function: for any $z\in \Ps^{1}$ we let $c(z):=b(a^{-1}(z))$. Equivalently, $w\in c(z)$ if and only if there exists $x\in\Ps^{1}$ such that $a(x)=z$, $b(x)=w$. The \textit{graph} of $c$ is the curve in $\Ps^{1}\times\Ps^{1}$ defined by
\[\Gamma_{c}:=\{(z,w)\in\Ps^{1}\times\Ps^{1}\;|\;w\in c(z)\}=\{(a(x),b(x));\;x\in\Ps^{1}\}.\]
More generally, we define by induction $c^{0}(z)=\{z\}$, $c^{k+1}(z)=\bigcup_{w\in c^{k}(z)}c(w)$.
Equivalently, $w\in c^{k}(z)$ if and only if there exists a sequence $x_{1},\cdots,x_{k}\in\Ps^{1}$ such that $a(x_{1})=z$, $b(x_{i})=a(x_{i+1})$ for all $1\leq i<k$, and $b(x_{k})=w$. We define the graph $\Gamma_{c}^{k}$ as $\{(z,w)\in\Ps^{1}\times\Ps^{1}\;|\;w\in c^{k}(z)\}$. It is also a (reducible) curve. Indeed, letting $M$ be the subset of points in $(\Ps^{1})^{4}$ whose second and third coordinates coincide, and letting $\pi:(\Ps^{1})^{4}\rightarrow\Ps^{1}\times\Ps^{1}$ be the projection onto the first and fourth coordinates, we have
\[\Gamma_{c}^{k+1}=\pi\left(\left(\Gamma_{c}^{k}\times\Gamma_{c}\right)\cap M\right).\]
Finally, the \textit{forward orbit} of a point $z\in\Ps^{1}$ with respect to a correspondence $c$ is defined as
\[\mathrm{O}_{c}(z):=\bigcup_{k\geq0}c^{k}(z).\]
It is the smallest set containing $z$ and invariant under $c$.

\begin{remark}
    A more thorough analysis of correspondences would require us to define the graphs $\Gamma_{c}^{k}$ as effective divisors, in order to account for multiplicity in the images of $c^{k}$. We will limit ourselves to the description without multiplicity given above.
\end{remark}

Following \cite{Bel23}, we call a correspondence $c$ \textit{finitary} if $\mathrm{O}_{c}(z)$ is finite for every $z\in\Ps^{1}$. In particular, this implies that the sequence
\[\bigcup_{i=0}^{k}\Gamma_{c}^{i}\;;\quad k\geq0\]
is eventually constant, equal to a closed curve $\Gamma_{c}^{\infty}$. Letting $(s_{1},s_{2})$ denote the bidegree of $\Gamma_{c}^{\infty}$, we define the \textit{generic orbit size} of $c$ as $s_{c}:=s_{1}$. As the name implies, for a general choice of $z$, $\mathrm{O}_{c}(z)$ is of size $s_{c}$.

\begin{remark}
    J. Bellaïche defines finitary correspondences in \cite{Bel23} using a slightly different language. Let $c^{-1}:=ab^{-1}$, and define the \textit{grand orbit} $\mathrm{GO}_{c}(z)$ of $z$ with respect to $c$ as the smallest set containing $z$ and invariant under both $c$ and $c^{-1}$. Bellaïche calls a correspondence finitary if $\mathrm{GO}_{c}(z)$ is finite for every $z$.

    These two definitions are equivalent. Indeed, if $\mathrm{GO}_{c}(z)$ is finite for every $z$ then so is $\mathrm{O}_{c}(z)$. Conversely, suppose that the forward orbits of $c$ are finite. Observe that the graphs $\Gamma_{c^{-1}}^{k}$ are the reflexions of the graphs $\Gamma_{c}^{k}$ along the main diagonal of $\Ps^{1}\times\Ps^{1}$, and therefore the forward orbits of $c^{-1}$ are also finite. After replacing $c$ with $c^{-1}$ if necessary, we may assume that the bidegree $(r_{1},r_{2})$ of $\Gamma_{c}$ satisfies $r_{1}\geq r_{2}.$ A forward orbit $\mathrm{O}_{c}(z)$ can be seen as a directed graph where $w_{1}\rightarrow w_{2}$ whenever $w_{2}\in c(w_{1})$. For a general choice of $z$, every point of $\mathrm{O}_{c}(z)$ has out-degree $r_{1}$ and in-degree at most $r_{2}\leq r_{1}$. Since the number of edges is $r_{1}\cdot\#\mathrm{O}_{c}(z)$, the in-degrees must be all equal to $r_{1}$. In particular $r_{1}=r_{2}$, and more importantly, $\mathrm{O}_{c}(z)$ is invariant by $c^{-1}$ so $\mathrm{O}_{c}(z)=\mathrm{GO}_{c}(z)$.
\end{remark}

We return to the proof that Theorem \ref{theo6} implies Theorem \ref{theo5}. Let $f,g$ be commuting, non-exceptional endomorphisms of equal degree $d\geq2$. By Theorem \ref{theo6}, there exists some integer $p\geq1$ such that $f^{p}=g^{p}$. We denote by $(a,b,u)$ the first term in the Ritt sequence associated with $(f,g)$.

\begin{lemma}
    \label{lem4}
    The correspondence $c=ba^{-1}$ is finitary, with generic orbit size $s_{c}$ bounded from above by $pd^{p}$.
\end{lemma}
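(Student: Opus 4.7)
The plan is to propagate a simple commutation identity along the graphs $\Gamma_c^{m}$. First I would extract from the relation $aub = bua$ of Equation \eqref{eq5} together with $au = f$ and $bu = g$ the identity $fb = ga$, which says that $(z,w) \in \Gamma_c$ implies $f(w) = g(z)$. A straightforward induction on $m$, combined with the commutation $fg = gf$, then yields
\[
(z,w) \in \Gamma_c^{m} \;\Longrightarrow\; f^{m}(w) = g^{m}(z)
\]
for every $m \geq 0$.

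Applied at $m = p$ and combined with $f^{p} = g^{p}$, this gives $c^{p}(z) \subset F_z := (f^{p})^{-1}(f^{p}(z))$, a set of cardinality at most $d^{p}$. The crucial observation is that $F_z$ is $c^{p}$-invariant: if $v \in F_z$, then $f^{p}(v) = f^{p}(z)$, hence $F_v = F_z$, and so $c^{p}(v) \subset F_v = F_z$. Iterating, $\bigcup_{k \geq 0} c^{pk}(z) \subset F_z$.

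For the general iterate I would write $m = pk + j$ with $0 \leq j < p$, so that $c^{m}(z) = c^{j}(c^{pk}(z)) \subset c^{j}(F_z)$. The key estimate is that $c^{j}(F_z)$ is contained in the fiber over $g^{p}(z)$ of the morphism $g^{p-j} \circ f^{j}$, which has degree $d^{p}$. Indeed, given $w \in c^{j}(v)$ with $v \in F_z$, the identity above gives $f^{j}(w) = g^{j}(v)$, so using commutativity of $f$ and $g$ and $f^{p}=g^{p}$,
\[
g^{p-j}(f^{j}(w)) = g^{p}(v) = f^{p}(v) = f^{p}(z) = g^{p}(z).
\]
Hence $|c^{j}(F_z)| \leq \deg(g^{p-j} \circ f^{j}) = d^{p}$, and summing over $j$ yields
\[
|\mathrm{O}_{c}(z)| \leq \sum_{j=0}^{p-1} |c^{j}(F_z)| \leq p\, d^{p}.
\]
Since this bound is uniform in $z$, it controls the generic orbit size $s_{c}$, and $c$ is finitary.

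There is no serious obstacle in the argument: the only subtle point is recognising that the commutativity of $f$ and $g$ together with $f^{p} = g^{p}$ allows one to assemble the factors $g^{p-j}$ and $f^{j}$ into a single morphism of degree exactly $d^{p}$, rather than $d^{p}\cdot d^{j}$, which is what keeps the final bound growing only linearly in $p$.
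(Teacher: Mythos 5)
Your proof is correct and follows essentially the same strategy as the paper's: both partition the orbit by index modulo $p$ and show each residue class lies in a single fiber of a degree-$d^{p}$ morphism. The only cosmetic difference is that you first isolate the clean identity $fb=ga$ (from $aub=bua$) to get $f^{m}(w)=g^{m}(z)$ for $(z,w)\in\Gamma_{c}^{m}$ and then use the morphisms $g^{p-j}f^{j}$, whereas the paper lifts the chain along $u$ and telescopes directly to get $f^{p}(w_{1})=f^{p}(w_{2})$ for representatives in the same residue class; the computations and the resulting bound $pd^{p}$ are the same.
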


\begin{proof}
    Let $z$ be a general point in $\Ps^{1}$, and let us show that $s_{c}=\#\mathrm{O}_{c}(z)\leq pd^{p}$. Let $w\in c^{p}(z)$. By definition, there exist $x_{1},\cdots, x_{p}\in \Ps^{1}$ such that $a(x_{1})=z$, $b(x_{i})=a(x_{i+1})$ for all $1\leq i<p$, and $b(x_{p})=w$. For every $1\leq i\leq p$, choose a point $x_{i}'\in u^{-1}(x_{i})$. Since $(f,g)=(au,bu)$, we have $f(x_{1}')=z$, $g(x_{i}')=f(x_{i+1}')$ for all $1\leq i<p$, and $g(x_{p}')=w$. Now,
    \begin{align*}
        f^{p}(z)=g^{p}(z)=g^{p}(f(x_{1}')) &=fg^{p-1}(g(x_{1}'))\\
        &=fg^{p-1}(f(x_{2}'))\\
        &=f^{2}g^{p-2}(g(x_{2}'))\\
        &\cdots\\
        &=f^{p}(g(x_{p}'))=f^{p}(w)
    \end{align*}
    so $f^{p}(z)=f^{p}(w)$. By the same argument, if $w_{1}\in c^{k_{1}}(z)$, $w_{2}\in c^{k_{2}}(z)$ are such that $k_{1}\equiv k_{2}\mod p$, then $f^{p}(w_{1})=f^{p}(w_{2})$. Choosing $w_{i}\in c^{i}(z)$ for every $1\leq i\leq p$, we obtain
    \[\mathrm{O}_{c}(z)\subset\bigcup_{i=1}^{p}f^{-p}(f^{p}(w_{i}))\]
    and thus $s_{c}=\#\mathrm{O}_{c}(z)\leq pd^{p}$.
\end{proof}

Let $(a_{n},b_{n},u_{n})_{n\geq0}$ be the full Ritt sequence associated with $(f,g)$. Recall that for any $n\geq0$, $(f_{n},g_{n})=(a_{n}u_{n},b_{n}u_{n})$ is a commuting pair of equal degree $d$. Moreover, the equality $f^{p}=g^{p}$ can be written $(a_{0}u_{0})^{p}=(b_{0}u_{0})^{p}$: cancelling $u_{0}$ on the right and composing by $u_{0}$ on the left, we obtain $f_{1}^{p}=g_{1}^{p}$, and by induction, $f_{n}^{p}=g_{n}^{p}$. Therefore, Lemma \ref{lem4} applies to every pair $(f_{n},g_{n})$: the correspondence $c_{n}=b_{n}a_{n}^{-1}$ is finitary with generic orbit size $s_{c_{n}}$ bounded from above by $pd^{p}$.

\begin{lemma}
    \label{lem5}
    Let $n\geq0$ be an index in the Ritt sequence $(a_{n},b_{n},u_{n})_{n\geq0}$ such that $\deg(a_{n})=\deg(a_{n+1})=:r$. Then the generic orbit sizes of $c_{n}$ and $c_{n+1}$ satisfy $s_{c_{n+1}}\geq rs_{c_{n}}$.
\end{lemma}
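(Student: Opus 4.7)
The plan is to identify the graph of the correspondence $c_{n+1}$ explicitly using Equation~\eqref{eq6}, and then deduce the orbit-size bound by a direct count.

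First, I would observe that Equation~\eqref{eq6}, $a_{n}b_{n+1}=b_{n}a_{n+1}$, forces every pair $(z,w)$ in the graph $\Gamma_{c_{n+1}}$ to satisfy $a_{n}(w)=b_{n}(z)$: if $w=b_{n+1}(y)$ and $z=a_{n+1}(y)$, then $a_{n}(w)=a_{n}(b_{n+1}(y))=b_{n}(a_{n+1}(y))=b_{n}(z)$. Thus $\Gamma_{c_{n+1}}\subset\Delta_{n}:=\{(x,y)\in\Ps^{1}\times\Ps^{1}\;|\;b_{n}(x)=a_{n}(y)\}$. A bidegree comparison makes this inclusion an equality: $\Gamma_{c_{n+1}}$ is the image of the generically injective parametrization $(a_{n+1},b_{n+1})$, so it is irreducible of bidegree $(\deg a_{n+1},\deg b_{n+1})=(r,r)$, while $\Delta_{n}$ is an effective divisor cut out by the bihomogeneous equation $b_{n}(x)=a_{n}(y)$, also of bidegree $(r,r)$. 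Since the bidegree of $\Delta_{n}$ already accounts for a single copy of $\Gamma_{c_{n+1}}$, the two coincide as $1$-cycles. Translating this back to the language of correspondences, for generic $z$ we obtain
\[c_{n+1}(z)=a_{n}^{-1}(b_{n}(z)).\]

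Next, I would iterate this identity. Using $c_{n}=b_{n}a_{n}^{-1}$, an induction on $k\geq 1$ yields
\[c_{n+1}^{k}(z)=a_{n}^{-1}\bigl(c_{n}^{k-1}(b_{n}(z))\bigr)\]
generically on $z$, since $c_{n+1}^{k}=a_{n}^{-1}(b_{n}a_{n}^{-1})^{k-1}b_{n}=a_{n}^{-1}c_{n}^{k-1}b_{n}$. Taking the union over $k\geq 1$ gives the inclusion
\[\mathrm{O}_{c_{n+1}}(z)\supset a_{n}^{-1}\bigl(\mathrm{O}_{c_{n}}(b_{n}(z))\bigr).\]
For $z$ in a suitable dense open---chosen so that $b_{n}(z)$ has a $c_{n}$-orbit of the generic size $s_{c_{n}}$, and so that this orbit is disjoint from the critical values of $a_{n}$---the right-hand side consists of $r\cdot s_{c_{n}}$ distinct points, and the desired bound $s_{c_{n+1}}\geq r\cdot s_{c_{n}}$ follows.

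There is no deep obstacle in this argument; the content of the lemma is concentrated in the identification $\Gamma_{c_{n+1}}=\Delta_{n}$, which is an immediate consequence of Equation~\eqref{eq6} and the bidegree count. The only mild care required lies in the choice of the generic point $z$: one must intersect finitely many nonempty Zariski opens to guarantee simultaneously that $b_{n}(z)$ has a maximal $c_{n}$-orbit, that this orbit avoids the ramification locus of $a_{n}$, and that the fibers $a_{n}^{-1}(w)$ over distinct points $w$ of the orbit remain pairwise disjoint.
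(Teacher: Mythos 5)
The proposal is correct and closely follows the paper's argument. The identification $\Gamma_{c_{n+1}}=\Delta_{n}$ by bidegree comparison is the common key step. Where the paper then establishes the inclusion $a_{n}^{-1}\bigl(\mathrm{O}_{c_{n}}(z)\bigr)\subset\mathrm{O}_{c_{n+1}}(z')$ (for $z'\in a_{n}^{-1}(z)$) by explicitly lifting a $c_{n}$-chain step by step through the surjective parametrization of $\Delta_{n}$, you package the same information more algebraically: the equality of curves reads, at the level of correspondences, as $c_{n+1}=a_{n}^{-1}b_{n}$ --- valid set-theoretically at every point, since $\Gamma_{c_{n+1}}$ is precisely the image of the parametrization $(a_{n+1},b_{n+1})$ --- and iterating this gives $c_{n+1}^{k}=a_{n}^{-1}c_{n}^{k-1}b_{n}$ by an elementary induction. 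This recasts the paper's chain-lifting diagram as a composition law for correspondences, and it also slightly shortens the generic-point bookkeeping: the paper has to argue separately that $\mathrm{O}_{c_{n}}(\tilde z)=\mathrm{O}_{c_{n}}(z)$ for $\tilde z=b_{n}(z')$, whereas you bound directly against $\mathrm{O}_{c_{n}}(b_{n}(z))$. One minor remark: the last requirement in your genericity discussion, that the fibers $a_{n}^{-1}(w)$ over distinct points $w$ of the orbit be pairwise disjoint, is automatic for any map and imposes no condition on $z$.
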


\begin{proof}
    Recall that the graph $\Gamma_{n+1}=\Gamma_{c_{n+1}}=\{(a_{n+1}(z),b_{n+1}(z));\;z\in\Ps^{1}\}$ is contained in $\Delta_{n}=\{(x,y)\in\Ps^{1}\times\Ps^{1}\;|\;b_{n}(x)=a_{n}(y)\}$. Since we are assuming that $\deg(a_{n})=\deg(a_{n+1})=r$ (and thus $\deg(b_{n})=\deg(b_{n+1})=r$) the bidegrees of $\Gamma_{n+1}$ and $\Delta_{n}$ are both equal to $(r,r)$ and hence $\Gamma_{n+1}=\Delta_{n}$. This translates to the following property: for every $(x,y)$ such that $b_{n}(x)=a_{n}(y)$, there exists a $z\in\Ps^{1}$ such that $a_{n+1}(z)=x$ and $b_{n+1}(z)=y$:
    \[\begin{tikzcd}
	& {\exists z} \\
	x && y \\
	& \bullet
	\arrow["{a_{n+1}}"', maps to, from=1-2, to=2-1]
	\arrow["{b_{n+1}}", maps to, from=1-2, to=2-3]
	\arrow["{b_{n}}"', maps to, from=2-1, to=3-2]
	\arrow["{a_{n}}", maps to, from=2-3, to=3-2]
    \end{tikzcd}\]
    Let $z$ be a general point in $\Ps^{1}$, and fix some $z^{'}\in a_{n}^{-1}(z)$. Since $z$ is general, the points in $\mathrm{O}_{c_{n}}(z)$ are not critical values of $a_{n}$, so $\#a_{n}^{-1}(\mathrm{O}_{c_{n}}(z))=rs_{c_{n}}$. To prove the lemma, it is therefore sufficient to show that $a_{n}^{-1}(\mathrm{O}_{c_{n}}(z))\subset \mathrm{O}_{c_{n+1}}(z')$.

    Let $\tilde{z}=b_{n}(z')$. The orbit $\mathrm{O}_{c_{n}}(\tilde{z})$ is contained in $\mathrm{O}_{c_{n}}(z)$, but it is also of size $s_{c_{n}}$ so $\mathrm{O}_{c_{n}}(\tilde{z})=\mathrm{O}_{c_{n}}(z)$. Let $w'\in a_{n}^{-1}(\mathrm{O}_{c_{n}}(z))$, and write $w=a_{n}(w')$. We have $w\in \mathrm{O}_{c_{n}}(\tilde{z})$, so there exists a sequence $x_{1},\cdots,x_{k}\in\Ps^{1}$ such that $a_{n}(x_{1})=\tilde{z}$, $b_{n}(x_{i})=a_{n}(x_{i+1})$ for all $1\leq i<k$, and $b_{n}(x_{k})=w$. In other words, the pairs $(z',x_{1})$, $(x_{i},x_{i+1})$ for every $1\leq i<k$, and $(x_{k},w')$ are all contained in $\ \Delta_{n}=\Gamma_{n+1}$. We can therefore find $x_{0}',\cdots,x_{k+1}'$ such that $(a_{n+1}(x_{i}'),b_{n+1}(x_{i}'))=(z',x_{0})$, $(x_{i},x_{i+1})$, or $(x_{k},w')$ depending on whether $i=0$, $1\leq i<k$, or $i=k$:
    \[\begin{tikzcd}
	& {\exists x_{0}'} && {x_{1}'} && \cdots && {x_{k+1}'} \\
	{z'} && {x_{1}} && {x_{2}} & \cdots & {x_{k}} && {w'} \\
	& {\tilde{z}} && \bullet && \dots && w
	\arrow["{a_{n+1}}"', maps to, from=1-2, to=2-1]
	\arrow["{b_{n+1}}", maps to, from=1-2, to=2-3]
	\arrow["{a_{n+1}}"', maps to, from=1-4, to=2-3]
	\arrow["{b_{n+1}}", maps to, from=1-4, to=2-5]
	\arrow["{a_{n+1}}"', maps to, from=1-6, to=2-5]
	\arrow["{b_{n+1}}", maps to, from=1-6, to=2-7]
	\arrow["{a_{n+1}}"', maps to, from=1-8, to=2-7]
	\arrow["{b_{n+1}}", maps to, from=1-8, to=2-9]
	\arrow["{b_{n}}"', maps to, from=2-1, to=3-2]
	\arrow["{a_{n}}", maps to, from=2-3, to=3-2]
	\arrow["{b_{n}}"', maps to, from=2-3, to=3-4]
	\arrow["{a_{n}}", maps to, from=2-5, to=3-4]
	\arrow["{b_{n}}"', maps to, from=2-5, to=3-6]
	\arrow["{a_{n}}", maps to, from=2-7, to=3-6]
	\arrow["{b_{n}}"', maps to, from=2-7, to=3-8]
	\arrow["{a_{n}}", maps to, from=2-9, to=3-8]
    \end{tikzcd}\]
    This shows that $w'\in \mathrm{O}_{c_{n+1}}(z')$ as desired.
\end{proof}

We can now conclude our proof that Theorem \ref{theo6} implies \ref{theo5}. Indeed, there exists an index $n_{0}$ in Ritt's sequence such that for every $k\geq 0$, $\deg(a_{n_{0}+k})=\deg(a_{n_{0}}):=r$. By a repeated application of Lemma \ref{lem5}, we have $s_{c_{n_{0}+k}}\geq r^{k}s_{c_{n_{0}}}$. At the same time, we know by Lemma \ref{lem4} that $s_{c_{n_{0}+k}}\leq pd^{p}$, so this can only occur if $r=1$.

\begin{proposition}
    \label{prop5}
    Let $f,g\in\End(\Ps^{1})^{+}$ be non-linear, non-exceptional endomorphisms. Assume that $f$ and $g$ have the same degree, the same measure of maximal entropy, and three common fixed points $z^{(1)},z^{(2)},z^{(3)}$ in their common Julia set. Then $f=g$.
\end{proposition}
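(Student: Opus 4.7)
The plan is to combine Lemma~\ref{lem1} and Ritt's theorem with the Ritt sequence machinery, and then to transport the three fixed points through the sequence in such a way that everything reduces to the observation that a linear map with three distinct fixed points is the identity.

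First, since $f$ and $g$ share the measure of maximal entropy and admit a common fixed point in their common Julia set, Lemma~\ref{lem1} gives that they commute. As they are non-linear, non-exceptional and of equal degree, Theorem~\ref{theo6} provides an integer $p\geq 1$ with $f^{p}=g^{p}$. Consider the Ritt sequence $(a_{n},b_{n},u_{n})_{n\geq 0}$ attached to the commuting pair $(f,g)$; since $f,g$ are non-exceptional of equal degree, Theorem~\ref{theo5} yields an index $N\geq 0$ with $\deg(a_{N})=\deg(b_{N})=1$. Setting $\sigma:=a_{N}b_{N}^{-1}$, we obtain $f_{N}=\sigma g_{N}$ with $\sigma$ linear.

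The heart of the argument is to transport the three common fixed points $z^{(1)},z^{(2)},z^{(3)}$ through the sequence without collapsing them. If $z$ is a common fixed point of $(f_{n},g_{n})$, then $\tilde{z}:=u_{n}(z)$ satisfies
\begin{equation*}
    f_{n+1}(\tilde{z})=u_{n}(a_{n}(u_{n}(z)))=u_{n}(f_{n}(z))=u_{n}(z)=\tilde{z},
\end{equation*}
and similarly $g_{n+1}(\tilde{z})=\tilde{z}$; so $\tilde{z}$ is a common fixed point of $(f_{n+1},g_{n+1})$. Crucially, the map $z\mapsto u_{n}(z)$ is \emph{injective} on common fixed points: if $u_{n}(z_{1})=u_{n}(z_{2})=y$ for two such $z_{1},z_{2}$, then the identity $z_{i}=f_{n}(z_{i})=a_{n}(u_{n}(z_{i}))=a_{n}(y)$ forces $z_{1}=z_{2}$. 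Setting $z_{0}^{(i)}:=z^{(i)}$ and $z_{n+1}^{(i)}:=u_{n}(z_{n}^{(i)})$ inductively, the images $z_{n}^{(1)},z_{n}^{(2)},z_{n}^{(3)}$ thus remain three distinct common fixed points of $(f_{n},g_{n})$ at every step, and in particular at $n=N$.

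The relation $f_{N}=\sigma g_{N}$ then yields $\sigma(z_{N}^{(i)})=\sigma(g_{N}(z_{N}^{(i)}))=f_{N}(z_{N}^{(i)})=z_{N}^{(i)}$ for $i=1,2,3$, so the linear map $\sigma$ has three distinct fixed points and must be the identity; hence $f_{N}=g_{N}$. To conclude, from $a_{N}u_{N}=f_{N}=g_{N}=b_{N}u_{N}$ and the right cancellativity of $\End(\Ps^{1})^{+}$, we deduce $a_{N}=b_{N}$. Equation~\eqref{eq6} with $n=N-1$ then reads $a_{N-1}a_{N}=b_{N-1}a_{N}$, so $a_{N-1}=b_{N-1}$ by right cancellativity, and $f_{N-1}=g_{N-1}$. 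Iterating this descent through the sequence yields $f=g$. The step I anticipated to be most delicate was the non-collapsing of the fixed points under the $u_{n}$, but this resolves automatically from the identity $a_{n}\circ u_{n}=f_{n}$ acting trivially on common fixed points.
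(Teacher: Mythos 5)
Your proof is correct and follows essentially the same route as the paper: commute via Lemma~\ref{lem1}, run the Ritt sequence, transport the three fixed points injectively through the $u_{n}$, kill the linear map $\sigma$ using its three fixed points, and descend back to $f=g$. The only differences are cosmetic — the appeal to Theorem~\ref{theo6} for $f^{p}=g^{p}$ is never used, and your descent via Equation~\eqref{eq6} replaces the paper's descent via Equation~\eqref{eq5}; both are valid.
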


\begin{proof}
    By Lemma \ref{lem1}, $f$ and $g$ commute. Let $(a_{n},b_{n},u_{n})_{n\geq0}$ be the associated Ritt sequence, and write $(f_{n},g_{n})=(a_{n}u_{n},b_{n}u_{n})$ as usual. For any $i\in\{1,2,3\}$, we have
    \[f_{1}(u_{0}(z^{(i)}))=u_{0}a_{0}u_{0}(z^{(i)})=u_{0}(f(z^{(i)}))=u_{0}(z^{(i)})\]
    so $u_{0}(z^{(i)})$ is a fixed point of $f_{1}$, and the same argument shows that it is a fixed point of $g_{1}$ as well. Moreover, $u_{0}(z^{(i)})\neq u_{0}(z^{(j)})$ whenever $i\neq j$, for otherwise
    \[z^{(i)}=a_{0}(u_{0}(z^{(i)}))=a_{0}(u_{0}(z^{(j)}))=z^{(j)}.\]
    More generally, define
    \[z_{0}^{(i)}=z^{(i)},\quad z_{n+1}^{(i)}=u_{n}(z_{n}^{(i)}).\]
    Then $z_{n}^{(1)},z_{n}^{(2)},z_{n}^{(3)}$ are distinct fixed points of $f_{n}$ and $g_{n}$. By Theorem \ref{theo5}, there exists some $n\geq0$ such that $\deg(a_{n})=\deg(b_{n})=1$. Let $\sigma=a_{n}b_{n}^{-1}$, so that $f_{n}=\sigma g_{n}$. It follows that $z_{n}^{(1)},z_{n}^{(2)},z_{n}^{(3)}$ are fixed points of $\sigma$ and thus $\sigma$ must be equal to the identity: we have $f_{n}=g_{n}$. Applying Equation \eqref{eq5},
    \[f_{n-1}a_{n-1}=a_{n-1}u_{n-1}a_{n-1}=a_{n-1}u_{n-1}b_{n-1}=b_{n-1}u_{n-1}a_{n-1}=g_{n-1}a_{n-1}\]
    so $f_{n-1}=g_{n-1}$, and by a backwards induction, $f=g$. 
\end{proof}

\begin{remark}
    Proposition \ref{prop5} also holds when $f$ and $g$ are simultaneously conjugate to power maps or Chebyshev polynomials. Indeed, in both cases the equality $\deg(f)=\deg(g)$ directly implies that $f=\sigma g$ for some linear $\sigma$, and thus the existence of three common fixed points yields $f=g$.

    If $f$ and $g$ are both Lattès maps contained in the same semigroup $L(E,\Gamma)$, Proposition \ref{prop5} holds after a slight modification. Let $R$ be the ring of endomorphisms of $E$, and denote by $R^{\times}$ its group of units. We define the \textit{Lattès degree} as $L(E,\Gamma)\rightarrow (R\backslash\{0\})/R^{\times}$, $\ell_{\alpha,t}\mapsto \alpha\mod R^{\times}$. If $R=\Z$ this is simply the square of the degree map. Now, if $f$ and $g$ are of equal Lattès degree then $f=\sigma g$ for some linear $\sigma$, and if they have three common fixed points, $f=g$.
\end{remark}

\section{Proof of Theorems}

\subsection{Virtual cyclicity of \texorpdfstring{$C(f^{\infty})$}{C(f∞)}} Our Theorem \ref{theo1} is a consequence of Propositions \ref{prop2} and \ref{prop5}.

\begin{proof}[Proof of Theorem \ref{theo1}] Let $f$ be a non-linear, non-exceptional endomorphism of $\Ps^{1}$. Our goal is to show that $C(f^{\infty})$ is virtually cyclic. Let $\deg(f)=p_{1}^{\alpha_{1}}\cdots p_{s}^{\alpha_{s}}$ be the prime factorization of the degree of $f$, and define $d_{0}:=p_{1}^{\beta_{1}}\cdots p_{s}^{\beta_{s}}$, where $(\beta_{1},\cdots,\beta_{s})$ is obtained by dividing $(\alpha_{1},\cdots,\alpha_{s})$ by their largest common divisor. By Proposition \ref{prop1}, $(i)\Rightarrow(ii)$, for any non-linear $g\in C(f^{\infty})$, there exists $m,n\geq1$ such that $g^{m}=f^{n}$. Thus, there exists some $k\geq1$ such that $\deg(g)=d_{0}^{k}$. This allows us to define the \textit{logarithmic degree}
\[\ell: C(f^{\infty})\rightarrow\N,\; g\mapsto\log_{d_{0}}(\deg(g)).\]
By Proposition \ref{prop2}, there are infinitely many $z\in\Per(f)$ such that $C(f^{\infty})\cdot z$ is finite. We may therefore choose a finite orbit $\mathcal{O}:=C(f^{\infty})\cdot z$ of size at least $3$ contained in the Julia set $J$ of $f$. 
For any non-linear $g\in C(f^{\infty})$, we have $\mathcal{O}\subset \Per(g)$, as a consequence of Proposition \ref{prop1}, $(i)\Rightarrow(iii)$. This shows that $C(f^{\infty})$ acts on $\mathcal{O}$ by permutations.

Let $\mathfrak{S}(\mathcal{O})$ be the group of permutations of $\mathcal{O}$. Let $F=f^{N}$ be such that $F_{|\mathcal{O}}=\mathrm{id}_{|\mathcal{O}}$, and consider the map
\[\varphi:C(f^{\infty})\rightarrow\{0,1,\cdots,\ell(F)-1\}\times\mathfrak{S}(\mathcal{O}),\quad g\mapsto (\ell(g)\;\mathrm{mod}\;\ell(F),\; g_{|\mathcal{O}}).\]
We may choose a finite set of representatives $\{h_{1},\cdots,h_{r}\}$ for each value taken by $\varphi$, each one of minimal degree in its fiber. Let us prove that $C(f^{\infty})=\{h_{1},\cdots,h_{r}\}\langle f\rangle$.

We begin by stating two general facts (see Subsections \ref{ss21} and \ref{ss23}). Any non-linear $g\in C(f^{\infty})$ has the same set of preperiodic points as $f$, so in view of Lemma \ref{lem2}, every non-linear element in $C(f^{\infty})$ is non-exceptional. Moreover, recall that every non-linear element of $C(f^{\infty})$ has the same measure of maximal entropy as $f$.

Let $g\in C(f^{\infty})$. There exists some $1\leq i\leq r$ such that $\varphi(g)=\varphi(h_{i})$. In particular, there is some $k\geq0$ such that $\ell(g)=\ell(h_{i})+k\ell(F)$. Set $h:=h_{i}F^{k}$, and let us prove that $g=h$. By construction, $\deg(g)=\deg(h)$. Since $F_{|\mathcal{O}}=\mathrm{id}_{|\mathcal{O}}$, we have $g_{|\mathcal{O}}=h_{|\mathcal{O}}$. If we denote by $p\geq1$ the order of $g_{|\mathcal{O}}$, then $g^{p}$ and $hg^{p-1}$ are of equal degree and act as the identity on $\mathcal{O}$. In particular, they have three common fixed points. If $g,h$ are both linear, it immediately follows that $g^{p}=hg^{p-1}=\mathrm{id}$ and hence $g=h$. Suppose then that $g,h$ are non-linear. Then $g^{p}$ and $hg^{p-1}$ are both non-exceptional, have a common measure of maximal entropy, and since $\mathcal{O}\subset J$, they have three common fixed points in their common Julia set. Hence, we may apply Proposition \ref{prop5}: we have $g^{p}=hg^{p-1}$, and by right cancellation, $g=h$. This concludes the proof.
\end{proof}

\begin{remark}
    \label{rem2}
    The arguments used throughout this proof still hold if we replace $C(f^{\infty})$ with any subsemigroup of $C(f^{\infty})$ containing $f$. In particular, we deduce that $C(f)$ is of the form $\{g_{1},\cdots,g_{s}\}\langle f\rangle$, as F. Pakovich established in \cite{Pak20}. Like Pakovich, we can show that the value of $s$ is bounded in terms of $\deg(f)$. Indeed, inspecting the proof of Theorem \ref{theo1}, we see that $s$ is controlled by $\deg(f)$ as well as the size of the chosen finite orbit $\mathcal{O}$. In turn, $\#\mathcal{O}$ can be controlled by the degree of $f$ when dealing with the semigroup $C(f)$, as a consequence Remark \ref{rem1}. We obtain from this an effective version of Ritt's theorem: there exists a computable function $\eta:\N\rightarrow\N$ such that for any non-linear, non-exceptional, commuting pair of endomorphisms $u,v$, we have $u^{m}=v^{n}$ where $n\leq\eta(\deg(u))$ and $m\leq\eta(\deg(v))$ (see \cite{Pak20} for details). On the other hand, we don't obtain a bound on the degrees of the endomorphisms $g_{1},\cdots,g_{s}$, as Pakovich does.
\end{remark}

\begin{proof}[Proof of Corollary \ref{cor1}]
    Let $f$ be an endomorphism which is not conjugate to a power map. Our goal is to show that there exists $N\geq1$ such that $C(f^{\infty})=C(f^{N})$. We divide the proof into four cases:

    \underline{Linear case}. Suppose that $f$ is linear. If $f$ is of finite order $N$ then $C(f^{\infty})=C(f^{N})=\End(\Ps^{1})^{+}$. We may therefore assume that $f$ is of infinite order. In particular it cannot commute with any non-linear endomorphism, so that every element in $C(f^{\infty})$ is linear. If $f$ has a single fixed point then up to conjugation it is of the form $z\mapsto z+\alpha$, and its centralizer is comprised of elements of the form $z\mapsto z+\beta$. Similarly, if $f$ has two fixed points then up to conjugation it is of the form $z\mapsto\alpha z$, and its centralizer is comprised of elements of the form $z\mapsto\beta z$. In both cases, we see that the description of the centralizer remains unchanged if we replace $f$ with a positive iterate, so $C(f^{\infty})=C(f)$.

    \underline{Non-exceptional case}. Suppose that $f$ is non-linear, non-exceptional. By Theorem \ref{theo1} there exists $h_{1},\cdots,h_{r}$ such that $C(f^{\infty})=\{h_{1},\cdots,h_{r}\}\langle f\rangle$. For every $1\leq i\leq r$ there is some $n_{i}$ such that $h_{i}\in C(f^{n_{i}})$. If we set $N=n_{1}\cdots n_{r}$, then $f^{N}$ commutes with every $h_{i}$. Since these elements generate $C(f^{\infty})$ alongside $f$, we obtain $C(f^{\infty})=C(f^{N})$.

    \underline{Chebyshev case}. Suppose that $f=\pm T_{d}$, $d\geq2$. Then by Lemma \ref{lem2} $(ii)$, every element in $C(f^{\infty})$ is of the form $\pm T_{e}$, $e\geq1$. Suppose that $d$ is even. Then $f$ is an even map: up to conjugation by $z\mapsto-z$, we have $f=T_{d}$ and $C(f)=\{T_{e};\;e\geq1\}$. Similarly, if $d$ is odd then $f$ is an odd map, so $C(f^{2})=C(T_{d^{2}})=\{\pm T_{e};\;e\geq1\}$. In both cases, we see that the description of the centralizers remains unchanged if we replace $f$ with an iterate, so $C(f^{\infty})=C(f)$ or $C(f^{2})$.

    \underline{Lattès case}. (Recall the notation from Subsection \ref{ss23}.) Suppose that $f=\ell_{\alpha,t}\in L(E,\Gamma)$ for some elliptic curve $E$ and group of automorphisms $\Gamma=\langle [\omega]\rangle$. Note that for $n\geq 1$, we have $f^{n}=\ell_{\alpha^{n},t_{n}}$, where $t_{n}=[1+\alpha+\cdots+\alpha^{n-1}](t)$.
    
    Let $g\in C(f^{\infty})$. By Lemma \ref{lem2} $(iii)$, $g$ is of the form $\ell_{\beta,s}\in L(E,\Gamma)$. For any $n\geq1$, $f^{n}$ and $g$ commmute if and only if the associated maps $\varphi,\psi:E\rightarrow E$ commute:
    \[\forall z\in E,\quad [\alpha^{n}]([\beta](z)+s)+t_{n}=[\beta]([\alpha^{n}](z)+t_{n})+s\]
    which is equivalent to
    \begin{equation}
        \label{eq7}
        [\alpha^{n}-1](s)=[\beta-1](t_{n}).
    \end{equation}
    Define $n=n(g)$ as the smallest integer such that $f^{n}$ commutes with $g$. Let $\tilde{\beta}$ be such that $\tilde{\beta}\equiv\beta\mod (1-\omega)$. Then Equation \eqref{eq7} still holds after replacing $\beta$ with $\tilde{\beta}$, because $t_{n}$ is a $[1-\omega]$-torsion point. Therefore, $\tilde{g}:=\ell_{\tilde{\beta},s}$ commutes with $f^{n}$, and thus $n(\tilde{g})\leq n(g)$. By exchanging the roles of $g$ and $\tilde{g}$, we have $n(\tilde{g})=n(g)$.
    
    We have shown that, for any $\ell_{\beta,s}\in C(f^{\infty})$, the integer $n(\ell_{\beta,s})$ only depends on the choice of $[1-\omega]$-torsion point $s$ and on the choice of $\beta$ modulo $(1-\omega)$. There are only finitely many such choices, and thus only finitely many $n(\ell_{\beta,s})$'s:
    \[\{n(\ell_{\beta,s})\;|\;\ell_{\beta,s}\in C(f^{\infty})\}=:\{n_{1},\cdots,n_{r}\}\]
    so by setting $N=n_{1}\cdots n_{r}$ we obtain $C(f^{\infty})=C(f^{N})$.
\end{proof}

\begin{example}
    Let $f:z\mapsto z^{2}$. For $n\geq1$, let $\zeta_{n}$ be a  primitive $(2^{n}-1)$-th root of unity. Then $g_{n}:z\mapsto \zeta_{n}z$ belongs to $C(f^{n})$ but not to $C(f^{k})$, $k\leq n$. This proves that $C(f^{\infty})$ is not $C(f^{N})$ for any $N\geq1$.
\end{example}

\subsection{The Tits alternative} Our proof of Theorem \ref{theo2} will revolve around embedding semigroups in groups. To that end, we will recall certain notions below. We refer the reader to \cite[§1.10]{CP61}, \cite[§0.8]{Coh85} for more details.\\

Let $S$ be a cancellative semigroup which does not contain a free subsemigroup of rank $2$. Then $S$ satifies the \textit{right and left Ore's conditions}:
\[\forall a,b\in S,\quad aS\cap bS\neq\varnothing,\;Sa\cap Sb\neq\varnothing.\]
This implies by Ore's theorem that $S$ embeds into its \textit{group of fractions}, which we will denote by $G_{S}$. In this context, $G_{S}$ can be defined explicitly as the set of $S$-valued pairs, written $ab^{-1}$, $a,b\in S$, modulo the equivalence relation
\[ab^{-1}\sim cd^{-1}\quad\Longleftrightarrow \quad \exists e,f\in S,\; ae=cf,\;be=df.\]
The group law in $G_{S}$ is then given by $(ab^{-1})(cd^{-1})=(au)(dv)^{-1}$, where $u,v\in S$ satisfy $bu=cv$. Note that any element $ab^{-1}\in G_{S}$ can also be written as $c^{-1}d$, where $ca=db$. The group of fractions satisfies the following universal property: any semigroup homomorphism $\varphi:S\rightarrow G$, where $G$ is a group, extends to a group homomorphism $\varphi:G_{S}\rightarrow G$. The extension is explicitly given by $\varphi(ab^{-1})=\varphi(a)\varphi(b)^{-1}$.

Let $T$ be a subsemigroup of $S$. The group generated by $T$ in $G_{S}$ is naturally isomorphic to $G_{T}$, so we get an inclusion $G_{T}<G_{S}$. We say that $T$ is of \textit{finite index} in $S$ if there exists a finite family $\{s_{1},\cdots, s_{r}\}\subset S$ such that for any $s\in S$, there is some $1\leq i\leq r$ such that $s_{i}s\in T$. In particular, we have the following inclusion in $G_{S}$:
\[S\subset\bigcup_{i=1}^{r}s_{i}^{-1}G_{T}.\]
For any $s\in S$, we have $sS\subset S$ so left multiplication by $s$ (and hence by $s^{-1}$) permutes the left cosets $s_{1}^{-1}G_{T},\cdots,s_{r}^{-1}G_{T}$. Since $S$ and $S^{-1}$ generate $G_{S}$, we obtain $G_{S}=s_{1}^{-1}G_{T}\cup\cdots\cup s_{r}^{-1}G_{T}$. In other words, $G_{T}$ is of finite index in $G_{S}$.\\

The other main ingredient in the proof of Theorem \ref{theo2} is the following criterion for finding free subsemigroups in $\End(\Ps^{1})^{+}$:

\begin{theorem}[{\cite[Theorem 1.3]{BHPT24}}]
    \label{theo7}
    Let $f,g$ be non-linear endomorphisms of $\Ps^{1}$ such that $\PrePer(f)\neq\PrePer(g)$. Then the semigroup generated by $f$ and $g$ contains a free subsemigroup of rank $2$.
\end{theorem}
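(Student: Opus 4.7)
The classical strategy is a \emph{ping-pong} argument in $\Ps^{1}(\C)$: find two elements $F,G\in\langle f,g\rangle$ together with two disjoint open sets $U,V\subset\Ps^{1}(\C)$ such that $F(\overline{U}\cup\overline{V})\subset U$ and $G(\overline{U}\cup\overline{V})\subset V$, and additionally that $F,G$ act on $\overline{U}\cup\overline{V}$ with enough contraction/injectivity that distinct words produce distinct maps. The ping-pong inclusion alone is not sufficient because $\End(\Ps^{1})^{+}$ is not left-cancellative (for example $z^{2}\circ z=z^{2}\circ(-z)$), so one must arrange a genuine separation of images of distinct words, typically via strict spherical (or hyperbolic-metric) contraction on $\overline{U}\cup\overline{V}$.

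The starting point is to convert the combinatorial hypothesis into a dynamical one. By the Yuan--Zhang rigidity theorem \cite[Theorem 1.3]{YZ21a}, $\PrePer(f)\neq\PrePer(g)$ is equivalent to $\mu_{f}\neq\mu_{g}$. One can then extract a repelling periodic point $p$ of $f$ that is not preperiodic for $g$, and symmetrically $q$ of $g$ that is not preperiodic for $f$: these exist because repelling periodic cycles are dense in the Julia set, preperiodic sets are countable, and the inequality of measures prevents either Julia set from being contained in the other's preperiodic locus. Choose disjoint small disks $U\ni p$ and $V\ni q$.

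I would then build $F$ and $G$ as compositions of $f$'s and $g$'s that exploit the repelling behaviour at $p$ and $q$. Near $p$, a large iterate $f^{N}$ has local inverse branches with strong Koebe-type contraction, and a preparatory composition $h\in\langle f,g\rangle$ can be used to funnel $\overline{U}\cup\overline{V}$ into the linearization neighbourhood of $p$, so that a subsequent application of an appropriate forward iterate collapses the resulting image into $U$; a symmetric construction yields $G$ collapsing into $V$. Because $p$ and $q$ are not preperiodic for the \emph{other} map, there is genuine room to separate the images of $\overline{U}$ and $\overline{V}$, which is what ultimately prevents the ping-pong inclusion from degenerating.

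The main obstacle is ensuring that a single composition $F$ simultaneously contracts both $\overline{U}$ and $\overline{V}$ into $U$ with enough uniform injectivity that different words in $F,G$ remain distinguishable as maps; since elements of $\langle f,g\rangle$ are surjective on $\Ps^{1}$, one cannot obtain a literal global contraction but must exploit folding behaviour and distortion bounds on well-chosen sheets of the composition. Coordinating this with the symmetric requirement on $G$, uniformly in the word length, is the delicate quantitative content of the argument in \cite{BHPT24}.
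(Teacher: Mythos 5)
The paper does not prove this statement: Theorem \ref{theo7} is imported verbatim from \cite[Theorem 1.3]{BHPT24} and used as a black box, so there is no internal proof to compare against. Your proposal must therefore stand on its own, and as written it does not: it is a strategy outline whose decisive step is explicitly deferred back to the reference. The opening reduction is fine — by \cite[Theorem 1.4]{YZ21a}, $\PrePer(f)\neq\PrePer(g)$ forces $\PrePer(f)\cap\PrePer(g)$ to be finite and $\mu_{f}\neq\mu_{g}$, so all but finitely many repelling periodic points of $f$ lie outside $\PrePer(g)$ (though your stated justification, mere countability of the preperiodic sets, is not enough by itself; you need the finiteness of the intersection). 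You also correctly identify the real obstruction: since degree-$\geq 2$ maps are not injective, the semigroup is not left cancellative, so the ping-pong inclusions $F(\overline{U}\cup\overline{V})\subset U$ and $G(\overline{U}\cup\overline{V})\subset V$ do not by themselves imply that distinct words give distinct maps. But your final paragraph concedes that resolving this is ``the delicate quantitative content of the argument in \cite{BHPT24}'' — and that concession is the entire theorem.

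Moreover, the mechanism you propose for building $F$ is dynamically backwards: near a repelling periodic point $p$ of $f$, forward iterates of $f$ are expanding, so no forward composition in $\langle f,g\rangle$ will ``collapse'' a neighbourhood of $p$ into a small disk $U$ around $p$; the contracting inverse branches you invoke are not elements of the semigroup. Repelling points are useful here to produce escape and expansion, not contraction. The proof in \cite{BHPT24} is in fact arithmetic rather than topological: one descends to a number field or function field, uses the canonical heights $\hat{h}_{f},\hat{h}_{g}$ (which differ exactly when $\PrePer(f)\neq\PrePer(g)$) to produce a point $\alpha$ with $\hat{h}_{f}(\alpha)=0<\hat{h}_{g}(\alpha)$, and then plays ping-pong with the height values of $w(\alpha)$ as $w$ ranges over words; evaluating at a single point whose itinerary encodes the word is what separates distinct words as maps without any appeal to injectivity. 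If you want to salvage a complex-analytic route you must supply an analogous device — some invariant of $w$ recoverable from $w$ applied to a fixed point or measure — rather than a contraction picture that cannot distinguish $F\circ w_{1}$ from $F\circ w_{2}$.
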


\begin{proof}[Proof of Theorem \ref{theo2}]
    Let $S$ be a semigroup in $\End(\Ps^{1})$ defined over a finitely generated field $K$ of characteristic $0$. If $S$ satisfies a semigroup identity (assertion $(ii)$) or if its cancellative subsemigroups can be embedded in virtually abelian groups (assertion $(iii)$), then clearly $S$ cannot contain a free subsemigroup of rank $2$ (assertion $(i)$). Therefore, we only have to show that $(i)$ implies $(ii)$ and $(iii)$. Moreover, if $S$ is linear then these implications follow from \cite[Theorem 1]{OS95}. In light of this, we may assume for the rest of the proof that $(i)$ holds, and that $S$ contains a non-linear element $f$.

    By Theorem \ref{theo7}, for any non-linear $g\in S$ we must have $\PrePer(f)=\PrePer(g)$ and so $g(\PrePer(f))=\PrePer(f)$. For any linear $\sigma\in S$ we have $\PrePer(f)=\PrePer(\sigma f)$ and so $\sigma(\PrePer(f))=\PrePer(f)$. It follows that, for any $z\in\PrePer(f)$, the orbit $S\cdot z$ is contained in $\PrePer(f)$. Moreover, the orbit is defined over a finite extension of $K$: by the Northcott-Moriwaki property \cite[Theorem 4.3]{Mor00}, $S\cdot z$ is finite. We may choose a finite orbit $\mathcal{O}:=S\cdot z$ of size at least $3$ contained in the common Julia set $J$ of every non-linear element of $S$.

    Let $N\geq1$ be an integer such that, for any $\varphi:\mathcal{O}\rightarrow \mathcal{O}$, the $N$-th iterate $\varphi^{N}$ acts as the identity on $\varphi^{N}(\mathcal{O})$. We can choose for example $N=(\#\mathcal{O})!$. Let $g,h\in S$ be any pair of non-linear elements, and fix $z\in (g^{N}h^{N})^{N}(\mathcal{O})$. By our choice of $N$, $z$ is a fixed point of $(g^{N}h^{N})^{N}$. Similarly, $z\in g^{N}(\mathcal{O})$ so $z$ is also a fixed point of $g^{N}$. Since $g^{N}$ and $(g^{N}h^{N})^{N}$ have the same set of preperiodic points, they share the same measure of maximal entropy as a consequence of \cite[Theorem 1.4]{YZ21a}. We can therefore apply Lemma \ref{lem1}: $g^{N}$ and $(g^{N}h^{N})^{N}$ commute. We have obtained the following identity:
    \begin{equation}
        \label{eq8}
        g^{N}(g^{N}h^{N})^{N}=(g^{N}h^{N})^{N}g^{N}.
    \end{equation}
    Now, if any of the two endomorphisms is linear, say $g$, then $g^{N}$ has three fixed points in $\mathcal{O}$ and thus $g^{N}=\mathrm{id}$. In particular, Identity \eqref{eq8} still holds. This establishes $(i)\Rightarrow(ii)$.

    Let us now derive $(iii)$. Let $T$ be a cancellative subsemigroup of $S$. Since $T$ does not contain a free subsemigroup of rank $2$, it has a group of fractions $G_{T}$. If $T$ is linear, then $G_{T}$ is virtually abelian as a consequence of \cite[Theorem 1]{OS95}. We may therefore assume that $T$ contains a non-linear element $f$. We retain our choice of $\mathcal{O}$ and $N$ from our proof that $(i)\Rightarrow (ii)$. For any non-linear $g\in T$, we may apply Identity \eqref{eq8} to $f$ and $g$:
    \[f^{N}(f^{N}g^{N})^{N}=(f^{N}g^{N})^{N}f^{N}.\]
    By left cancellation, we obtain
    \[(f^{N}g^{N})^{N}=(g^{N}f^{N})^{N}.\]
    Let $z\in(f^{N}g^{N})^{N}(\mathcal{O})$. Then $z$ is contained in both $f^{N}(\mathcal{O})$ and $g^{N}(\mathcal{O})$, so it is a common fixed point of $f^{N}$ and $g^{N}$: by Lemma \ref{lem1}, $f^{N}$ and $g^{N}$ commute, and thus
    \begin{equation}
        \label{eq9}
        \Per(f)=\Per(g).
    \end{equation}
    Moreover, for any linear $\sigma\in T$ we have $\Per(f)=\Per(\sigma f)$, so $\sigma(\Per(f))=\Per(f)$. Applying once again the Northcott-Moriwaki property, we see that for any $w\in\Per(f)$, the orbit $T\cdot w\subset \Per(f)$ is finite.
    
    We may fix a finite orbit $\mathcal{O}_{0}:=T\cdot w$ that is contained in the Julia set of the non-linear elements of $T$. Define $T_{0}=\{g\in T\;|\; g_{|\mathcal{O}_{0}}=\mathrm{id}_{|\mathcal{O}_{0}}\}$, a subsemigroup of $T$. This subsemigroup is abelian: if $g,h$ are non-linear elements in $T_{0}$, then they commute by Lemma \ref{lem1}; if $\sigma\in T_{0}$ is linear and $g\in T_{0}$ is non-linear, then $g$ and $\sigma g$ commute so $g\sigma g=\sigma gg$ and hence $g\sigma =\sigma g$; if $\sigma,\tau\in T_{0}$ are both linear, then by the previous point $\sigma$ commutes with both $f$ and $\tau f$, so $\sigma\tau f=\tau f\sigma=\tau\sigma f$ and finally $\sigma\tau=\tau\sigma$. Thus, $G_{T_{0}}$ is abelian. The subsemigroup $T_{0}$ is also of finite index in $T$. Indeed, choose $\{h_{1},\cdots,h_{r}\}\subset T$ such that
    \[\{g_{|\mathcal{O}_{0}};\; g\in T\}=\{(h_{1})_{|\mathcal{O}_{0}},\cdots,(h_{r})_{|\mathcal{O}_{0}}\}.\]
    For any $g\in T$, letting $p\geq1$ denote the order of $g_{|\mathcal{O}_{0}}$, there exists some $1\leq i\leq r$ such that $(g^{p-1})_{|\mathcal{O}_{0}}=(h_{i})_{|\mathcal{O}_{0}}$, and so $h_{i}g\in T_{0}$. This proves that $G_{T_{0}}$ is of finite index in $G_{T}$, which means that $G_{T}$ is virtually abelian, establishing $(i)\Rightarrow(iii)$.
\end{proof}

\begin{proof}[{Proof of Corollary \ref{cor2}}]
    Let $S$ be a cancellative semigroup in $\End(\Ps^{1})^{+}$, and assume that $S$ does not contain a free subsemigroup of rank $2$. In particular, $S$ has a group of fractions $G_{S}$.
    
    Let us begin by showing that $G_{S}$ is locally virtually abelian. Let $H$ be a finitely generated subgroup of $G_{S}$. If we denote by $\{f_{1}g_{1}^{-1},\cdots,f_{r}g_{r}^{-1}\}$ a finite set of generators for $H$, then $H$ is contained in $G_{S_{0}}$, where $S_{0}$ is the subsemigroup of $S$ generated by $\{f_{1},g_{1},\cdots, f_{r},g_{r}\}$. The semigroup $S_{0}$ is defined over a finitely generated field and does not contain a free subsemigroup of rank $2$: by Theorem \ref{theo2}, $(i)\Rightarrow (iii)$, $G_{S_{0}}$ is virtually abelian and so is $H$.
    
    It only remains to show that $G_{S}$ is in fact virtually abelian, under the assumption that $S$ is not linear or a power semigroup.

    Let $f$ be a non-linear element of $S$. By Theorem \ref{theo7}, for any non-linear $g\in S$ we have $\PrePer(f)=\PrePer(g)$. If $f$ is conjugate to a Chebyshev polynomial (resp. to a Lattès map in $L(E,\Gamma)$), then by Lemma \ref{lem2}, every element of $S$ is simultaneously conjugate to a Chebyshev polynomial (resp. to a Lattès map in $L(E,\Gamma)$ for the same pair $(E,\Gamma)$). In both cases, $S$ is defined over a finitely generated field so we conclude by Theorem \ref{theo2}, $(i)\Rightarrow (iii)$. 

    Finally, assume that $f$ is non-exceptional. For any non-linear $g\in S$, the semigroup generated by $f$ and $g$ is defined over a finitely generated field. It is also cancellative and contains no free subsemigroup of rank $2$: by inspecting the proof of Theorem \ref{theo2}, $(i)\Rightarrow(iii)$, we see that $\Per(f)=\Per(g)$ (see \eqref{eq9} above). Similarly, if $\sigma\in S$ is linear, we have $\Per(f)=\Per(\sigma f)$ so $\sigma(\Per(f))=\Per(f)$. We have shown that $S\subset C(f^{\infty})$. By Theorem \ref{theo1}, $C(f^{\infty})$ is finitely generated so we may once again apply Theorem \ref{theo2}, $(i)\Rightarrow (iii)$ to conclude that $G_{S}$ is virtually abelian.
\end{proof}

\bibliographystyle{amsalpha}
\bibliography{bib}

\end{document}